\newcounter{truc}
\newtheorem{Lemma2}[truc]{Lemma}
\newtheorem{Remarque}{\textnormal{\bf{R\scriptsize{EMARK}}}}
\newtheorem{Exemple}{\textnormal{\bf{E\scriptsize{XAMPLE}}}}
\newcommand \NN {\mathbb{N \xspace}}
\newcommand \RR {\mathbb{R \xspace}}
\newcommand \ZZ {\mathbb{Z \xspace}}
\newcommand \LL {\mathbb{L \xspace}}
\newcommand \EE {\mathbb{E \xspace}}
\newcommand \PP {\mathbb{P \xspace}}
\newcommand \II {\mathds{1 \xspace}}
\newcommand \VV {\mathbb{V \xspace}}
\author{Fabien Montégut}
\institute{Institut de Math\'ematiques de Toulouse UMR5219, Université de Toulouse CNRS, 118 Route de Narbonne, F 31062 Toulouse Cedex 9,    France. \email{fabien.montegut@math.univ-toulouse.fr}}
\title{Double asymptotic for random walks on hypercubes} 
\begin{document}

\maketitle

\begin{abstract} We consider the sum of the coordinates of a simple random walk on the $K$-dimensional hypercube, and prove a double asymptotic of this process, as both the time parameter $n$ and the space parameter $K$ tend to infinity. Depending on the asymptotic ratio of the two parameters, the rescaled processes converge towards either a "stationary Brownian motion", an Ornstein-Uhlenbeck process or a Gaussian white noise.
\keywords{Limit theorems \and Markov chains \and Hypercube}
\subclass{60F17 \and 60J10}
\end{abstract}

%

\section{Introduction}

Many results (like the Law of Large Numbers or the Central Limit Theorem) are already known for the asymptotic behavior in time of an additive functional of a Markov chain (see for instance \cite{MT09}). But the case where we consider a sequence of such processes is only partially studied. Here we adress the problem of a double asymptotic as both the time and the index in the sequence tend to infinity. For instance, a well understood case is the discretization of a diffusion process : as we consider larger time horizons and finer meshes, the discrete processes converge to the continuous diffusion they come from.

Actually this paper was initially motivated by the study of a constrained random walk introduced in \cite{BCEN15}, where an additive observable of a simple random walk on a graph $G_K$ whose vertices are $\lbrace -1,1\rbrace^K$ is described. The authors used a discrete Hodge decomposition to rewrite their observable as a sum of a divergence-free and a bounded gradient vector fields, and then proved that for every $K$ the rescaled constrained random walk converges in time to a Brownian motion with variance $\sigma_K^2=\frac{2}{K+2}$.

A natural generalization of this result would be to let $K$ grow to $+\infty$, but the diffusivity tends to $0$ as $K$ grows, which means that the normalization $\sqrt{n}$ used in \cite{BCEN15} is too strong to get a non-trivial limit in this case. Moreover the gradient part was neglected since it is bounded when $K$ is fixed. Actually, it is a function of $K$, and when $K$ tends to infinity it is no more obvious that it can be neglected.

In our setting we are dealing with a simplified version of this model. By removing some edges from the graphs $G_K$, the additive observable corresponds to a pure gradient term. Hence in this model we have $\sigma_K$ vanishing for every $K$. Moreover this toy model is more amenable to computations since the dependence in $K$ of the gradient term is quite simple. Even if the diffusivity is zero we managed to get a convergence to Gaussian processes when both $n$ and $K$ tend to $+\infty$.

Surprisingly we find out that the good normalization and the limiting process both depend on the asymptotic of the ratio of our parameters. Indeed, if the limit of $\frac{n}{K}$ is a positive constant, our rescaled process will converge to an Ornstein-Uhlenbeck process. If the ratio tends to $+\infty$ the limiting process is a Gaussian white noise (i.e. a collection of i.i.d. Gaussian random variables). Last but not least if $\frac{n}{K}$ tends to $0$ the initial value may diverge (for instance in the stationary case), hence we will prove first a  convergence to a Brownian motion if we subtract the value of the processes at time $0$ before considering a "stationary Brownian motion" (i.e. some weak form of Brownian motion starting from the Lebesgue measure) as a singular limit.

\section{The model}

Let the graph $H_K=(V_K,E_K)$ be the $K$-dimensional hypercube, more precisely :
\[V_K=\lbrace -1,+1\rbrace ^K \text{ and } E_K=\left\lbrace \lbrace u, u'_i\rbrace : u\in V_K, i\in [K]\right\rbrace\]
with $u'_i=(u^{(1)},\dots, u^{(i-1)},-u^{(i)},u^{(i+1)},\dots,u^{(K)})$ and $[K]\overset{def}{=}\llbracket 1,K\rrbracket$.

\medskip

We define $\left(Y_K(n)\right)_{n\geq 0}$ as the simple random walk on $H_K$ starting from a law $\mu_K$ on $V_K$. We set $f_K:V_K\rightarrow \RR$ the function giving the sum of the coordinates in $H_K$, namely :
\[\forall \ v\in V_K : \ f_K(v)=\sum_{i=1}^K v^{(i)}.\]

\bigskip

We are interested in the behavior of $f_K(Y_K(n))$, and more specifically we want to give some scaling limit as $n$ and $K$ both tend to infinity of the linear interpolations processes defined by :
\[X_{n,K}(t)=f_K(Y_K(\lfloor nt\rfloor)) \ \ t\geq 0.\]
In other words we want to find some $c_{n,K}$ and random process $\left( X_t\right)_{t\geq 0}$ such that :
\[\left( \frac{X_{n,K}(t)}{c_{n,K}}\right)_{t\geq 0} \underset{n,K\to\infty}{\longrightarrow} \left(X_t\right)_{t\geq 0}.\]
The mode of such convergences will be either the convergence in distribution in the set of càdlàg functions $D(\RR_+,\RR)$ (endowed with the Skorokhod topology used in \cite{EK91}), or a convergence of the finite-dimensional marginals (weakly or vaguely). In the next sections $X\overset{\mathcal{D}}{=}Y$ will mean that the random variables or processes $X$ and $Y$ both have the same probability law, and we will consider that $K$ is a function of $n$ (but we will keep writing $K$ instead of $K(n)$ to lighten the notations).

\bigskip

We begin with the intermediate regime (both parameters grow at comparable speeds) using diffusion approximation results from \cite{EK91}. Then in the fast regime ($n$ growing faster than $K$) the processes do no longer converge to a diffusion process, so we use an ersatz of Donsker's theorem (which will be proven in the appendix) to prove a convergence of the finite-dimensional laws. Lastly in the slow regime ($n$ grows slower than $K$) we state a convergence in law of the increments of the processes before proving a vague convergence to a Brownian motion starting from its invariant measure (i.e. the Lebesgue measure) using the results from the intermediate regime.

\bigskip

One can remark that $X_{n,K}$ is an affine transformation of an Ehrenfest's urn, which explains the convergence to an Ornstein-Uhlenbeck process in the intermediate regime (see for instance \cite{CM16} for a proof of this scaling limit). 

\section{Intermediate regime}

Consider the sequences of random variables defined by :
\begin{equation*}
Z_{n,K}(i) \overset{def}{=} \frac{f_K(Y_K(i))}{c_{n,K}}
\end{equation*}
for every $i\in\NN$. In this section we will assume that $K$ and $n$ grow at comparable speeds, namely there exists $\lambda >0$ such that :
\[\frac{n}{K}\underset{n\to\infty}{\longrightarrow}\lambda .\]

One can check that for every $n\geq 1$ the sequence $\left(Z_{n,K}(i)\right)_{i\in\NN}$ is an homogeneous Markov chain with values in :
\[\mathcal{S}_n=\left\lbrace \frac{2k-K}{c_{n,K}}:k\in\llbracket 0,K\rrbracket\right\rbrace\]
whose transition kernel is given for every $x\in\mathcal{S}_n$ by :
\[\PP_n(x,.)=\left(\frac{1}{2}+\frac{xc_{n,K}}{2K}\right)\delta_{x-\frac{2}{c_{n,K}}} + \left(\frac{1}{2}-\frac{xc_{n,K}}{2K}\right)\delta_{x+\frac{2}{c_{n,K}}}.\]

Using the same kind of proof as the Example 27.8 from \cite{CM16}, we simply have to compute the two following functions :
\begin{align*}
b_n(x) &\overset{def}{=}n\int_{\vert y-x\vert\leq 1}(y-x)\ \PP_n(x,dy) = -2x\frac{n}{K} \\
a_n(x) &\overset{def}{=}n\int_{\vert y-x\vert\leq 1}(y-x)^2\ \PP_n(x,dy) = 4\frac{n}{c_{n,K}^2}.
\end{align*}

We will mostly use the following result, which grants a convergence to diffusion processes for homogeneous Markov chains :
\begin{proposition}\label{EK}
Suppose there exist a random variable $X_0$ and two continuous functions $b:\RR\longrightarrow\RR$ and $a:\RR\longrightarrow\RR_+$ such that all the following properties hold for any $r,\varepsilon >0$ :
\begin{align*}
\sup_{\vert x\vert\leq r}\left\vert a_n(x)-a(x)\right\vert\underset{n\to\infty}{\longrightarrow}0,\\
\sup_{\vert x\vert\leq r}\left\vert b_n(x)-b(x)\right\vert\underset{n\to\infty}{\longrightarrow}0,\\
\sup_{\vert x\vert\leq r}n\times\PP_n(x,[x-\varepsilon,x+\varepsilon]^c)\underset{n\to\infty}{\longrightarrow}0,\\
\text{ and }\hspace{1cm} Z_{n,K}(0)\overset{\mathcal{D}}{\underset{n\to\infty}{\longrightarrow}}X_0.
\end{align*}
Then we get the following convergence of processes :
\begin{equation*}
\left(Z_{n,K}(\lfloor nt\rfloor)\right)_{t\geq 0}\overset{\mathcal{D}}{\underset{n\to\infty}{\longrightarrow}}\left(X_t\right)_{t\geq 0}
\end{equation*}
where $\left(X_t\right)_{t\geq 0}$ is the diffusion starting from $X_0$ and solving :
\[ dX_t=b(X_t)dt+\sqrt{a(X_t)}dW_t \]
with $(W_t)_{t\geq 0}$ a standard Brownian motion starting from $0$.
\end{proposition}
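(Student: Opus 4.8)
The plan is to read this proposition as a diffusion-approximation statement and to reduce it to the martingale-problem machinery underlying the results of \cite{EK91}. The central object is the (time-accelerated) discrete generator of the chain: for a test function $f$ set
\[ A_n f(x) = n\int_\RR \big(f(y)-f(x)\big)\,\PP_n(x,dy), \]
so that $\big(Z_{n,K}(\lfloor nt\rfloor)\big)_{t\geq 0}$ is the càdlàg Markov process associated with $A_n$ (and run at speed $n$, the state space $\mathcal{S}_n$ being embedded in $\RR$ through the inclusion maps required by the changing-state-space version of the theorem). The limiting diffusion has generator $Af(x)=b(x)f'(x)+\tfrac12 a(x)f''(x)$ on the domain $C_c^\infty(\RR)$. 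The whole argument then reduces to showing that $A_nf\to Af$ uniformly on compact sets for every $f\in C_c^\infty(\RR)$, and feeding this convergence—together with the assumed convergence of initial laws $Z_{n,K}(0)\to X_0$—into the operator-convergence theorem of \cite{EK91}.

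To establish the generator convergence I would fix $f\in C_c^\infty(\RR)$ and split the integral according to whether $|y-x|\le 1$ or not. The far part is bounded by $2\Vert f\Vert_\infty\, n\,\PP_n(x,[x-1,x+1]^c)$, which tends to $0$ uniformly on $\{|x|\le r\}$ by the third hypothesis applied with $\varepsilon=1$. On the near part a second-order Taylor expansion gives
\[ n\int_{|y-x|\le 1}\big(f(y)-f(x)\big)\,\PP_n(x,dy) = b_n(x)f'(x)+\tfrac12 a_n(x)f''(x)+R_n(x), \]
with a cubic remainder $|R_n(x)|\le \tfrac16\Vert f'''\Vert_\infty\, n\int_{|y-x|\le 1}|y-x|^3\,\PP_n(x,dy)$. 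By the first two hypotheses the two main terms converge to $b(x)f'(x)+\tfrac12 a(x)f''(x)$ uniformly on compacts, so the only delicate point is to make $R_n$ vanish.

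To control $R_n$ I would truncate once more at a threshold $\varepsilon\in(0,1)$: on $\{|y-x|\le\varepsilon\}$ use $|y-x|^3\le\varepsilon|y-x|^2$ to bound the contribution by $\varepsilon\,a_n(x)$, and on $\{\varepsilon<|y-x|\le 1\}$ use $|y-x|^3\le 1$ to bound it by $n\,\PP_n(x,[x-\varepsilon,x+\varepsilon]^c)$. Since $a_n$ is uniformly bounded on $\{|x|\le r\}$ (it converges to the continuous function $a$) and the second piece vanishes uniformly by the third hypothesis, one gets $\limsup_n \sup_{|x|\le r}|R_n(x)|\le C\varepsilon$ for a constant depending only on $f$ and $r$; letting $\varepsilon\to 0$ yields $\sup_{|x|\le r}|R_n(x)|\to 0$, and hence $\sup_{|x|\le r}|A_nf(x)-Af(x)|\to 0$.

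Finally, $C_c^\infty(\RR)$ separates points and, under the stated continuity of $a$ and $b$, provides a core on which the martingale problem for $A$ is well posed—in the regimes of interest $a$ is constant and $b$ is linear, so the limit is an Ornstein–Uhlenbeck process with globally Lipschitz coefficients and a pathwise-unique solution, which settles uniqueness at once. Combining the uniform generator convergence, this well-posedness, and the convergence of initial laws, the diffusion-approximation theorem of \cite{EK91} delivers the announced convergence in distribution in $D(\RR_+,\RR)$. I expect the main obstacle to be precisely the uniform control of the cubic remainder $R_n$, handled by the double truncation above: this is where the jump-size hypothesis is genuinely needed, as it is exactly what rules out a surviving non-local (jump) part and forces the limit to be a diffusion.
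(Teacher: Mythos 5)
Your proposal is correct, but it does considerably more than the paper does: the paper's entire proof is the single sentence that the proposition ``is just an adapted version of Corollary 4.2 (p.~355) from \cite{EK91}'', whose hypotheses (uniform convergence on compacts of $a_n$ and $b_n$, uniform vanishing of $n\,\PP_n(x,[x-\varepsilon,x+\varepsilon]^c)$, convergence of the initial laws) are precisely the ones listed in the statement. What you have written is, in effect, a reconstruction of the proof of that cited corollary: the Taylor expansion of the accelerated discrete generator $A_nf$, the double truncation (at $\varepsilon$ and at $1$) that bounds the cubic remainder by $\varepsilon\,a_n(x)$ plus a jump term, and the reduction to the convergence theorems of \cite{EK91}. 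Two remarks on the comparison. First, your closing appeal to ``the operator-convergence theorem'' should really be to the localized martingale-problem convergence results of \cite{EK91} (on which Corollary 4.2 rests): uniform convergence of $A_nf$ on compacts is strictly weaker than the sup-norm convergence that the pure semigroup (Trotter--Kato type) theorems demand, and it is well-posedness of the limiting martingale problem, via localization, that bridges this gap. Second, you correctly isolate that well-posedness as a genuine extra ingredient: continuity of $a$ and $b$ alone does not provide it, it is an explicit hypothesis of the Ethier--Kurtz corollary, and it is only implicit in the paper's statement (``\emph{the} diffusion solving\dots''); your observation that in every regime used later $a$ is constant and $b$ is linear, hence globally Lipschitz with a pathwise-unique solution, is exactly what makes the citation legitimate. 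In short, both arguments rest on the same Ethier--Kurtz diffusion approximation, but yours unpacks the cited corollary into its working parts, at the price of needing some care about which of their convergence theorems is actually being invoked.
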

\begin{proof}
This result is just an adapted version of the Corollary 4.2 (p.355) from \cite{EK91} about the diffusion approximation.
\end{proof}

Once applied to the processes $Z_{n,K}$, it grants the following result :

\begin{theorem}\label{RegimInter}
Under the following assumptions :
\[\frac{n}{c_{n,K}^2}\underset{n\to\infty}{\longrightarrow}\sigma^2\geq 0, \hspace{1cm} \frac{n}{K}\underset{n\to\infty}{\longrightarrow}\lambda\geq 0 \hspace{0.5cm} \text{ and } \hspace{0.5cm} Z_{n,K}(0)\overset{\mathcal{D}}{\underset{n\to\infty}{\longrightarrow}} Z_0\]
we have the convergence in distributions of the random processes :
\[\left(Z_{n,K}(\lfloor nt\rfloor)\right)_{t\geq 0}\overset{\mathcal{D}}{\underset{n\to\infty}{\longrightarrow}}\left(O_{\lambda,\sigma} (t)\right)_{t\geq 0}\]
where $\left(O_{\lambda,\sigma}(t)\right)_{t\geq 0}$ denotes the diffusion process solving :
\[\left\lbrace \begin{array}{l} dO_{\lambda,\sigma}(t)=-2\lambda O_{\lambda,\sigma}(t)dt+2\sigma dW_t \\ O_{\lambda,\sigma}(0)\overset{\mathcal{D}}{=} Z_0. \end{array} \right. \]
\end{theorem}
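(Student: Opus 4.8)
The plan is to apply Proposition~\ref{EK} directly, reading off the candidate limiting coefficients from the two scaling assumptions. I would set
\[ b(x)=-2\lambda x \qquad\text{and}\qquad a(x)=4\sigma^2 , \]
both of which are continuous on $\RR$, with $a$ taking values in $\RR_+$ (including the degenerate value $0$ when $\sigma=0$), so that they are admissible in the proposition. Since $b$ is linear (hence Lipschitz) and $a$ is constant, the associated SDE has a unique solution, so the limiting process is unambiguous. The equation $dX_t=b(X_t)\,dt+\sqrt{a(X_t)}\,dW_t$ then reads $dX_t=-2\lambda X_t\,dt+2\sigma\,dW_t$ (using $\sqrt{4\sigma^2}=2\sigma$ because $\sigma\geq 0$), which is exactly the equation defining $O_{\lambda,\sigma}$, and the prescribed initial law $X_0=Z_0$ matches. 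It therefore remains to verify the four hypotheses of Proposition~\ref{EK}.

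Three of these are immediate consequences of the computations of $a_n$ and $b_n$ already carried out. As $a_n(x)=4n/c_{n,K}^2$ does not depend on $x$, one has $\sup_{\vert x\vert\leq r}\vert a_n(x)-a(x)\vert=4\vert n/c_{n,K}^2-\sigma^2\vert$, which tends to $0$ by assumption. For the drift, $b_n(x)-b(x)=-2x\,(n/K-\lambda)$, so $\sup_{\vert x\vert\leq r}\vert b_n(x)-b(x)\vert=2r\,\vert n/K-\lambda\vert\to 0$, again by assumption. Finally, the convergence $Z_{n,K}(0)\to Z_0$ is assumed outright.

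The only point demanding a short argument is the jump (tightness) condition. From the transition kernel the chain moves by $\pm 2/c_{n,K}$, so $\PP_n(x,[x-\varepsilon,x+\varepsilon]^c)=0$ as soon as $2/c_{n,K}\leq\varepsilon$, uniformly in $x$. I would therefore check that $c_{n,K}\to+\infty$: writing $c_{n,K}^2=n/(n/c_{n,K}^2)$ with numerator tending to $+\infty$ and denominator to the finite nonnegative limit $\sigma^2$ shows that $c_{n,K}^2\to+\infty$ in every regime, including the degenerate case $\sigma=0$. Consequently the jump size vanishes, and for $n$ large enough the quantity $n\times\PP_n(x,[x-\varepsilon,x+\varepsilon]^c)$ is identically zero, so its supremum over $\vert x\vert\leq r$ trivially converges to $0$. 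With all four hypotheses established, Proposition~\ref{EK} yields the announced convergence to $O_{\lambda,\sigma}$. I expect this last verification to be the only place requiring any care; in the degenerate regime $\sigma=0$ the limit reduces to the deterministic relaxation $Z_0\,e^{-2\lambda t}$, and it is worth noting that the proposition still applies there since $a\equiv 0$ is a legitimate nonnegative diffusion coefficient.
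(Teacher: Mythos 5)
Your proof is correct and follows essentially the same route as the paper: both apply Proposition~\ref{EK} with $a(x)=4\sigma^2$, $b(x)=-2\lambda x$, observe that the convergences of $a_n$ and $b_n$ are immediate from the hypotheses, and handle the jump condition by noting that $n/c_{n,K}^2\to\sigma^2<\infty$ forces $c_{n,K}\to\infty$, so the jump term $n\,\II_{2/c_{n,K}>\varepsilon}$ is eventually identically zero. Your additional remarks on well-posedness of the limiting SDE and on the degenerate case $\sigma=0$ are sound but not needed beyond what the paper records.
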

\begin{proof}
We simply apply the Proposition \ref{EK} : $a_N(x)=4\frac{n}{c_{n,K}^2}$ converges uniformly to $a(x)=4\sigma^2$, $b_N(x)=-2x\frac{n}{K}$ converges locally uniformly to $b(x)=-2x\lambda$ and :
\[n\PP_n(x,[x-\varepsilon,x+\varepsilon]^c) = n\II_{\frac{2}{c_{n,K}}>\varepsilon}\]
which tends uniformly to 0 since the condition $\frac{n}{c_{n,K}^2}\underset{n\to\infty}{\longrightarrow}\sigma^2$ guaranties that $c_{n,K}$ tends to infinity.
\end{proof}

\begin{Remarque}
We allow $\lambda = 0$ in the Theorem \ref{RegimInter} because we will use this specific case later to prove the Theorem \ref{TheorRegimLent} in the slow regime. But keep in mind that the intermediate regime restrains to positive values of $\lambda$.
\end{Remarque}

We can distinguish two different cases for the intermediate regime in the previous theorem :

\begin{corollary}
\begin{itemize}
\item[$\bullet$]If $\sigma^2>0$ (i.e. $c_{n,K}$ is equivalent to $\sigma\sqrt{n}$) then the limit diffusion is an Ornstein-Uhlenbeck process. Moreover, if $Y_K(0)$ is uniformly distributed, $Z_{n,K}(0)$ converges in law to a $\mathcal{N}(0,\frac{\sigma^2}{\lambda})$ random variable, making the process $O_{\lambda, \sigma}$ temporally stationary.
\item[$\bullet$]If $\sigma=0$ (i.e. $c_{n,K}$ grows faster than $\sqrt{n}$) the limit process is the (random) function $Z_0e^{-2\lambda t}$. Note that if $c_{n,K}$ grows faster than $K$ or $Y_K(0)$ is uniform over $V_K$, then the limit process is constantly equal to $0$.
\end{itemize}
\end{corollary}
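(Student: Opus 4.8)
The plan is to read each limiting object directly off Theorem~\ref{RegimInter}, the only real work being the identification of the initial law in the uniform case.

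First I would describe the shape of the limit process in each regime. When $\sigma^2>0$ (and $\lambda>0$) the drift $x\mapsto -2\lambda x$ is linear and mean-reverting while the diffusion coefficient $2\sigma$ is a nonzero constant, so the solution of $dO_{\lambda,\sigma}(t)=-2\lambda O_{\lambda,\sigma}(t)\,dt+2\sigma\,dW_t$ is by definition an Ornstein--Uhlenbeck process and nothing more is required. When $\sigma=0$ the diffusion term disappears and the equation degenerates into the linear ODE $dO_{\lambda,\sigma}(t)=-2\lambda O_{\lambda,\sigma}(t)\,dt$ with random initial value $Z_0$, whose unique solution is the random function $t\mapsto Z_0\,e^{-2\lambda t}$.

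Next I would compute the limiting initial law when $Y_K(0)$ is uniform, which is the single genuinely computational point. Uniformity makes the coordinates of $Y_K(0)$ independent signs, so $f_K(Y_K(0))$ is a centered sum of $K$ independent $\pm 1$ variables, of variance $K$; since $n/K\to\lambda>0$ forces $K\to\infty$, the central limit theorem gives $f_K(Y_K(0))/\sqrt{K}\overset{\mathcal{D}}{\longrightarrow}\mathcal{N}(0,1)$. Writing
\[
Z_{n,K}(0)=\frac{f_K(Y_K(0))}{\sqrt{K}}\cdot\frac{\sqrt{K}}{c_{n,K}},\qquad \frac{K}{c_{n,K}^2}=\frac{K}{n}\cdot\frac{n}{c_{n,K}^2}\underset{n\to\infty}{\longrightarrow}\frac{\sigma^2}{\lambda},
\]
Slutsky's lemma yields $Z_{n,K}(0)\overset{\mathcal{D}}{\longrightarrow}\mathcal{N}(0,\sigma^2/\lambda)$, which is precisely the remaining hypothesis of Theorem~\ref{RegimInter}.

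Finally I would settle the stationarity and degeneracy statements. From the explicit solution $O_{\lambda,\sigma}(t)=O_{\lambda,\sigma}(0)\,e^{-2\lambda t}+2\sigma\int_0^t e^{-2\lambda(t-s)}\,dW_s$ one sees that a centered Gaussian initial law of variance $v$ is preserved in time exactly when $v=(2\sigma)^2/(4\lambda)=\sigma^2/\lambda$; as starting a time-homogeneous diffusion from its invariant law produces a strictly stationary process, the value $\sigma^2/\lambda$ obtained above makes $O_{\lambda,\sigma}$ temporally stationary. For the degenerate case $\sigma=0$ I would check that $Z_0=0$ almost surely under either stated condition: if $c_{n,K}/K\to\infty$ then $\lvert Z_{n,K}(0)\rvert\le K/c_{n,K}\to 0$ irrespective of $\mu_K$, while if $Y_K(0)$ is uniform the variance $K/c_{n,K}^2$ tends to $\sigma^2/\lambda=0$; in both cases $Z_0\,e^{-2\lambda t}\equiv 0$. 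The only step demanding attention is the evaluation $K/c_{n,K}^2\to\sigma^2/\lambda$ and its matching with the invariant variance of the Ornstein--Uhlenbeck process; everything else is immediate from Theorem~\ref{RegimInter}.
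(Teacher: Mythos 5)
Your proposal is correct and takes precisely the route the paper intends: the corollary is stated there without proof, as an immediate consequence of Theorem~\ref{RegimInter}, and your elaboration (identifying the limiting SDE in each case, the CLT--Slutsky computation giving $Z_{n,K}(0)\overset{\mathcal{D}}{\longrightarrow}\mathcal{N}(0,\sigma^2/\lambda)$ from $K/c_{n,K}^2\to\sigma^2/\lambda$, the invariant-variance check $v=\sigma^2/\lambda$ for stationarity, and the two degeneracy arguments via the bound $\vert f_K\vert\leq K$ and the vanishing variance) supplies exactly the details the paper leaves implicit. No gaps; note only that your computation confirms the normalization is $c_{n,K}\sim\sqrt{n}/\sigma$ rather than the paper's parenthetical ``$\sigma\sqrt{n}$''.
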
 
\begin{Exemple}
If there exists $C\in [-1,1]$ such that $\frac{f_K(Y_K(0))}{K}\overset{\PP}{\underset{K\to\infty}{\longrightarrow}}C$, then we may set $c_{n,K}=K$ to check the assumptions of Theorem \ref{RegimInter} and the limit would be the deterministic function $t\mapsto Ce^{-2\lambda t}$.
\end{Exemple}

\section{Fast regime}

In this section we will consider regime $\frac{n}{K}\to\infty$, and we will always assume that $\mu_K$ is the uniform distribution on $V_K$. The Theorem \ref{RegimInter} let us think that the good normalization will be of the order of $\sqrt{K}$, but the limit process would no longer be a diffusion. Since we can't use the diffusion approximation theorems from \cite{EK91}, we need to take a closer look at the finite-dimensional laws of the processes. Let's define $\mathbf{t}_n=(t_1(n),\dots,t_s(n))$ such that
\begin{equation}\label{HypotTempsAleat}
\frac{n}{K}\left(t_{j+1}(n)-t_j(n)\right)\underset{n,K\to\infty}{\longrightarrow}+\infty \ \forall\  1\leq j\leq s-1
\end{equation} 
and set the following notation :
\[X_{n,K}(\mathbf{t}_n)=\left(X_{n,K}(t_1(n)),\dots,X_{n,K}(t_s(n)) \right).\]

Our goal is to show that :
\begin{equation}\label{EquatBut}
\frac{X_{n,K}(\mathbf{t}_n)}{\sqrt{K}}=\mathbf{F}\left(\left(\frac{1}{\sqrt{K}}\sum_{i\in B_k}\xi_i\right)_{k\in [N]}\right)
\end{equation}
for some linear functional $\mathbf{F}$, where $\left(\xi_i\right)_{i\in\NN}$ is an i.i.d. sequence of Rademacher random variables, and $\left(B_k\right)_{k\in [N]}$ is a random partition of $[K]$ (independent from the $\xi_i$).

\medskip

Since $X_{n,K}(t)=f_K(Y_K(\lfloor nt\rfloor))$, we will rewrite $f_K(Y_K(n))$ in a more suitable form :

\begin{proposition}\label{PropoChangNotat}
Let $\left(\xi_k\right)_{k\in\NN}$ be i.i.d. Rademacher random variables and let $\left(U_k^{(K)}\right)_{k\in\NN}$ be i.i.d. uniform random variables on $[K]$ independent from the $\xi_k$. Then :
\[f_K(Y_K(n))\overset{\mathcal{D}}{=}\sum_{i=1}^K\xi_i\left(\II_{i\notin O_0^n}- \II_{i\in O_0^n}\right)\]
where $O_i^j\overset{def}{=}\lbrace k\in [K] : \vert \lbrace i<l\leq j : U_l^{(K)}=k\rbrace\vert \text{ is odd}\rbrace$.
\end{proposition}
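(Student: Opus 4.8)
The plan is to exploit the elementary fact that the simple random walk on $H_K$ flips exactly one coordinate at each step, so that the configuration reached after $n$ steps depends on the trajectory only through the \emph{parity} of the number of flips suffered by each coordinate. This reduces the statement to a mere change of representation rather than any genuine estimate.

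First I would realize the walk explicitly. Let $(U_l^{(K)})_{l\geq 1}$ be i.i.d. uniform on $[K]$, independent of $Y_K(0)$, and define the trajectory by letting $Y_K(l)$ be $Y_K(l-1)$ with its $U_l^{(K)}$-th coordinate flipped. Since $H_K$ is $K$-regular and each vertex $u$ has for neighbours exactly the configurations $u_i'$ differing from $u$ in one coordinate, this construction is precisely the simple random walk on $H_K$ started from $\mu_K$. A straightforward induction on $l$ then shows that for each coordinate $i$,
\[
Y_K(n)^{(i)} = Y_K(0)^{(i)}\,(-1)^{N_i^{(n)}},
\qquad
N_i^{(n)} = \left\vert\{\,1\leq l\leq n : U_l^{(K)}=i\,\}\right\vert,
\]
because flipping coordinate $i$ an even number of times restores its initial sign and flipping it an odd number of times reverses it. By the definition of $O_0^n$, coordinate $i$ lies in $O_0^n$ exactly when $N_i^{(n)}$ is odd, so $(-1)^{N_i^{(n)}} = \II_{i\notin O_0^n} - \II_{i\in O_0^n}$.

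Summing over $i$ then yields
\[
f_K(Y_K(n)) = \sum_{i=1}^K Y_K(0)^{(i)}\left(\II_{i\notin O_0^n} - \II_{i\in O_0^n}\right).
\]
The final step is to observe that, since $\mu_K$ is the uniform distribution on $V_K$ in this section, the coordinates $(Y_K(0)^{(i)})_{i\in[K]}$ are i.i.d. Rademacher random variables, independent of the flip sequence $(U_l^{(K)})$ and hence of the set $O_0^n$. Replacing the family $(Y_K(0)^{(i)})_{i\in[K]}$ by an independent Rademacher family $(\xi_i)_{i\in[K]}$ with the same joint law leaves the law of the right-hand side unchanged, which gives the asserted equality in distribution.

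I do not expect a real obstacle here, as the whole argument is a reformulation; the only points requiring care are checking that the coordinate-wise parity description captures the full walk (the elementary induction above) and verifying the independence structure between the initial signs and the coordinate choices that legitimises substituting the $\xi_i$ under the uniform initialisation.
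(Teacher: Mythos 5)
Your proof is correct and follows essentially the same route as the paper: realize the walk through the i.i.d. uniform coordinate choices $U_l^{(K)}$, show by induction that $f_K(Y_K(n))=\sum_{i=1}^K (-1)^{\epsilon_i(n)}Y_K^{(i)}(0)$, identify the parity of the flip counts with membership in $O_0^n$, and substitute the Rademacher variables $\xi_i$ for the initial coordinates. Your added care in spelling out why the substitution is legitimate (uniformity of $\mu_K$ making the $Y_K^{(i)}(0)$ i.i.d. Rademacher and independent of the flip sequence) is a point the paper leaves implicit, but the argument is the same.
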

\begin{proof}
We use the simple fact that for all integer $n\geq 0$ :
\[f_K(Y_K(n+1))=f_K(Y_K(n))-2Y_K^{(U_n^{(K)})}(n).\]
By induction we get that :
\[f_K(Y_K(n))=f_K(Y_K(0))-2\sum_{i=1}^K  Y_K^{(i)}(0)\II_{\epsilon_i(n)\text{ odd}}= \sum_{i=1}^K (-1)^{\epsilon_i(n)} Y_K^{(i)}(0)\]
with $\epsilon_i(n)\overset{def}{=}\vert\lbrace U_k^{(K)}=i : k\in\llbracket 0,n-1\rrbracket\rbrace\vert$. Since only the oddness of $\epsilon_i(n)$ impacts the value of $f_K(Y_K(n))$, we get the expected result by denoting $\xi_i$ the value of $Y_K^{(i)}(0)$.
\end{proof}

Now that we made the $\left( \xi_i\right)_{i\in\NN}$ appear in the value of $X_{n,K}(\mathbf{t}_n)$, we want to construct a suitable partition to get the equation \eqref{EquatBut}.

Since we are only interested in the coordinates which have been drawn an odd number of times between $\lfloor nt_{i-1}(n)\rfloor$ and $\lfloor nt_i(n)\rfloor$, we define the following sets :
\begin{equation}\label{DefinB(J)}
\forall \ J\subset [s], \ B(J)\overset{def}{=}\left(\bigcap_{k\in J}O_{\lfloor nt_{k-1}(n)\rfloor}^{\lfloor nt_k(n)\rfloor}\right)\bigcap\left(\bigcap_{k\notin J}\left(O_{\lfloor nt_{k-1}(n)\rfloor}^{\lfloor nt_k(n)\rfloor}\right)^c\right)
\end{equation}
(with the convention $t_0(n)=0$). In order to lighten the notations, we will write for every $k\in [s]$ :
\[O_k(J)\overset{def}{=}\left\lbrace\begin{array}{ll}\phantom{(} O_{\lfloor nt_{k-1}(n)\rfloor}^{\lfloor nt_k(n)\rfloor}\phantom{)} & \text{if } k\in J, \\ \left(O_{\lfloor nt_{k-1}(n)\rfloor}^{\lfloor nt_k(n)\rfloor}\right)^c & \text{else.}\end{array}\right.\]
Thus with this new notation we have :
\begin{equation*}
B(J) = \bigcap_{k=1}^s O_k(J).
\end{equation*}

In fact, if we cut the integer interval $\llbracket 1,\lfloor nt_s(n)\rfloor\rrbracket$ into the $s$ intervals of the form $\llbracket \lfloor nt_{i-1}(n)\rfloor+1,\lfloor nt_i(n)\rfloor\rrbracket$, then the set $B(J)$ contains all coordinates which have been drawn an odd number of times on the $j$-th interval for all $j\in J$ and an even number of times on the $j$-th interval for all $j\notin J$.

For example, $B(\emptyset)$ is the set of coordinates which have been drawn an even number of times on each $\llbracket \lfloor nt_{i-1}(n)\rfloor+1,\lfloor nt_i(n)\rfloor\rrbracket$.

By construction, one can see that $\left\lbrace B(J):J\subset [s]\right\rbrace$ is a partition of $[K]$. Moreover we can see every set $O_0^{\lfloor nt_k(n)\rfloor}$ for $k\in [s]$ as a (disjoint) union of some $B(J)$, since the coordinates who have been drawn an odd number of times between $0$ and $\lfloor nt_k(n)\rfloor$ are the ones who have been drawn an odd number of times in an odd number of intervals preceding $\lfloor nt_k(n)\rfloor$, i.e. :
\begin{equation}\label{EquatUnionBJO}
O_0^{\lfloor nt_j(n)\rfloor}=\bigsqcup_{\substack{J\subset [s] \\ \vert J\cap [j]\vert \text{ odd}}}B(J).
\end{equation}

Combining \eqref{EquatUnionBJO} with the Proposition \ref{PropoChangNotat}, we get the following :
\begin{align}\label{EquatFinal}
X_{n,K}(\mathbf{t}_n)&\overset{\mathcal{D}}{=}\left(\sum_{i\in [K]}\left(\II_{i\notin O_0^{\lfloor nt_j(n)\rfloor}}-\II_{i\in O_0^{\lfloor nt_j(n)\rfloor}}\right)\xi_i\right)_{j\in [s]}\nonumber \\ 
&=\left(\sum_{J\subset [s]}(-1)^{\vert J\cap [j]\vert}\sum_{i\in B(J)}\xi_i\right)_{j\in [s]}.
\end{align}

\setcounter{truc}{\value{lemma}}

We now state a lemma (whose proof is in appendix), which is an ersatz of Donsker's Theorem with converging random time vectors :
\begin{lemma}\label{TheorDonskAleat}
Let $\left(\xi_i\right)_{i\in\NN}$ be i.i.d. Rademacher random variables and $\left(\mathbf{t}_K\right)_{K\geq 1}$ a sequence of random vectors in $\left(\RR_+\right)^k$ independent from $\left(\xi_i\right)_{i\in\NN}$. Define for all $K\geq 1$ and $\mathbf{s}=(s_1,\dots,s_k)\in\left(\RR_+\right)^k$ :
\[T_K(\mathbf{s})=\left(\frac{1}{\sqrt{K}}\sum_{i=1}^{\lfloor Ks_j\rfloor}\xi_i\right)_{j\in [k]}.\]

If there exists a deterministic $\mathbf{t}=(t_1,\dots,t_k)\in\left(\RR_+\right)^k$ such that $\mathbf{t}_K\overset{\PP}{\underset{K\to\infty}{\longrightarrow}}\mathbf{t}$, then :
\[T_K(\mathbf{t}_K)\overset{\mathcal{D}}{\underset{K\to\infty}{\longrightarrow}}\left(W_{t_j}\right)_{j\in [k]}\]
where $\left(W_s\right)_{s\in\RR_+}$ denotes a standard Brownian motion starting from $0$.
\end{lemma}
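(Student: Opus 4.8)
The plan is to reduce the statement to the classical finite-dimensional Donsker theorem evaluated at the \emph{deterministic} vector $\mathbf{t}$, and then to absorb the randomness of the time vector $\mathbf{t}_K$ into a vanishing error term handled by Slutsky's lemma. Write $S_K(s)=\frac{1}{\sqrt{K}}\sum_{i=1}^{\lfloor Ks\rfloor}\xi_i$ for the rescaled partial-sum process, so that $T_K(\mathbf{s})=(S_K(s_j))_{j\in[k]}$.

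First I would recall that for the fixed vector $\mathbf{t}=(t_1,\dots,t_k)$ one has
\[T_K(\mathbf{t})\overset{\mathcal{D}}{\underset{K\to\infty}{\longrightarrow}}(W_{t_j})_{j\in[k]}.\]
This is the multidimensional central limit theorem applied to the increments of $S_K$ over the disjoint intervals delimited by the ordered values of the $t_j$: these increments are independent, each a normalized sum of i.i.d.\ centered Rademacher variables of unit variance, so they converge jointly to independent centered Gaussians whose covariance reproduces the Brownian covariance $\min(t_i,t_j)$.

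The core of the proof is then to show that the discrepancy $T_K(\mathbf{t}_K)-T_K(\mathbf{t})$ converges to $0$ in probability; combined with the display above and Slutsky's lemma this yields the claim. For a fixed coordinate $j$, the difference $S_K(t_j(K))-S_K(t_j)$ is, up to a sign, a sum of $|\lfloor Kt_j(K)\rfloor-\lfloor Kt_j\rfloor|$ Rademacher variables divided by $\sqrt{K}$. Here I would use crucially the independence of $(\xi_i)$ and $\mathbf{t}_K$: conditioning on $\mathbf{t}_K$, the conditional variance of this difference equals $\frac{1}{K}|\lfloor Kt_j(K)\rfloor-\lfloor Kt_j\rfloor|$, which is bounded by $|t_j(K)-t_j|+\frac{1}{K}$. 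Chebyshev's inequality then gives, for every $\delta>0$,
\[\PP\left(|S_K(t_j(K))-S_K(t_j)|>\delta \ \middle|\ \mathbf{t}_K\right)\leq\frac{1}{\delta^2}\left(|t_j(K)-t_j|+\frac{1}{K}\right).\]

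The last step is to pass from this conditional bound to an unconditional one despite having only convergence in probability (not in $L^1$) of $\mathbf{t}_K$, which is the main technical point. Fixing $\eta>0$ and splitting on the event $\{|t_j(K)-t_j|\leq\eta\}$, whose complement has probability tending to $0$, I take the expectation of the conditional bound to obtain
\[\PP\left(|S_K(t_j(K))-S_K(t_j)|>\delta\right)\leq\frac{1}{\delta^2}\left(\eta+\frac{1}{K}\right)+\PP\left(|t_j(K)-t_j|>\eta\right).\]
Letting $K\to\infty$ and then $\eta\to0$ forces the left-hand side to $0$; doing this for each of the finitely many coordinates $j\in[k]$ shows $T_K(\mathbf{t}_K)-T_K(\mathbf{t})\overset{\PP}{\to}0$, which completes the argument. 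The only real obstacle is precisely this handling of the random times: one cannot directly plug random arguments into a weak-convergence statement, and the convergence being only in probability rules out a naive $L^1$ estimate, so the conditioning-and-truncation scheme (which relies on the independence assumption) is what legitimizes the reduction to the deterministic Donsker theorem. An alternative route would be to invoke Donsker's theorem in the Skorokhod space $D(\RR_+,\RR)$, observe that $(S_K,\mathbf{t}_K)$ converges jointly to $(W,\mathbf{t})$ because $\mathbf{t}$ is deterministic, and apply the continuous mapping theorem to the evaluation map, which is continuous at $(W,\mathbf{t})$ since $W$ is almost surely continuous.
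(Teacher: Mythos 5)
Your proof is correct, and it takes a genuinely different route from the paper's. The paper compares $T_K(\mathbf{t}_K)$ first with $W_{\mathbf{t}_K}$ and then $W_{\mathbf{t}_K}$ with $W_{\mathbf{t}}$: after conditioning on $\mathbf{t}_K$ (the same use of independence you make), it needs \emph{uniform convergence on compact sets} of $\varphi_K(\mathbf{s})=\EE[f(T_K(\mathbf{s}))]$ towards $\varphi(\mathbf{s})=\EE[f(W_\mathbf{s})]$ for bounded uniformly continuous $f$, which it establishes by combining pointwise Donsker convergence with modulus-of-continuity estimates for the partial-sum process (quoted from Karatzas--Shreve) and a matching continuity estimate for Brownian motion at nearby deterministic times. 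You instead compare $T_K(\mathbf{t}_K)$ with $T_K(\mathbf{t})$, i.e.\ you move the time perturbation inside the random walk rather than inside the limit process: conditionally on $\mathbf{t}_K$ the discrepancy in coordinate $j$ is a centered sum with explicit variance $\frac{1}{K}\left\vert\lfloor Kt_j(K)\rfloor-\lfloor Kt_j\rfloor\right\vert\leq \vert t_j(K)-t_j\vert+\frac{1}{K}$, and your conditional Chebyshev bound plus the truncation on $\{\vert t_j(K)-t_j\vert\leq\eta\}$ correctly converts convergence in probability of the times into convergence in probability of the discrepancy, after which Slutsky's lemma and the fixed-time multidimensional CLT finish the argument. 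Your approach buys elementarity and brevity: it dispenses with the uniform-convergence proposition, the Skorokhod-modulus estimates and the Brownian continuity lemma, at the price of exploiting the specific independent-increment, explicit-variance structure of Rademacher partial sums; the paper's heavier argument is more generic, since it only needs pointwise convergence of fixed-time laws together with equicontinuity-type control, and would survive replacing the walk by a process for which no explicit conditional variance is available. Your closing alternative (joint convergence of $(S_K,\mathbf{t}_K)$ to $(W,\mathbf{t})$ in path space and the continuous mapping theorem applied to the evaluation map, which is almost surely continuous at the continuous limit path $W$) is also valid, and is yet a third legitimate route that the paper does not take.
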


In order to apply the Lemma \ref{TheorDonskAleat}, we need to reorder the $\xi_i$ but also to prove that all the $\frac{\vert B(J)\vert}{K}$ converge to some constants.

\begin{lemma}\label{LemmeConveCardi}
Whatever the choices of $s\geq 1$, under the assumption \eqref{HypotTempsAleat} we get :
\[\forall \ J\subset [s], \ \frac{\vert B(J)\vert}{K}\overset{\LL^2}{\underset{\substack{n,K\to\infty \\ n/K\to\infty}}{\longrightarrow}}\frac{1}{2^s}.\]
\end{lemma}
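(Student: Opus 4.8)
The plan is to establish the $\LL^2$ convergence through a first- and second-moment analysis of $|B(J)|$, exploiting the exchangeability of the $K$ coordinates. Writing $|B(J)| = \sum_{k=1}^K \mathds{1}_{k\in B(J)}$, it suffices to prove that $\EE[|B(J)|]/K \to 2^{-s}$ and that $\mathrm{Var}(|B(J)|)/K^2 \to 0$, since then
\[ \EE\!\left[\Big(\tfrac{|B(J)|}{K}-\tfrac{1}{2^s}\Big)^2\right] = \mathrm{Var}\!\Big(\tfrac{|B(J)|}{K}\Big) + \Big(\tfrac{\EE[|B(J)|]}{K}-\tfrac{1}{2^s}\Big)^2 \longrightarrow 0. \]

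For the first moment, fix a coordinate $k$ and let $N_k^{(j)}$ be the number of indices $l$ in the $j$-th time-block $\llbracket \lfloor nt_{j-1}(n)\rfloor+1,\lfloor nt_j(n)\rfloor\rrbracket$ with $U_l^{(K)}=k$ (convention $t_0(n)=0$). As the $U_l^{(K)}$ are i.i.d., the variables $N_k^{(1)},\dots,N_k^{(s)}$ are independent, each distributed as $\mathrm{Bin}(m_j,1/K)$ with $m_j$ the block length. The elementary identity $\EE[(-1)^{N_k^{(j)}}]=(1-2/K)^{m_j}$ yields $\PP(N_k^{(j)}\text{ odd})=\tfrac12\big(1-(1-2/K)^{m_j}\big)$, and independence across blocks gives
\[ \PP(k\in B(J)) = \prod_{j\in J}\frac{1-(1-2/K)^{m_j}}{2}\ \prod_{j\notin J}\frac{1+(1-2/K)^{m_j}}{2}. \]
Assumption \eqref{HypotTempsAleat} forces $m_j/K\to\infty$ for every block, so $(1-2/K)^{m_j}\to 0$, each factor tends to $\tfrac12$, and by exchangeability $\EE[|B(J)|]/K = \PP(1\in B(J)) \to 2^{-s}$.

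For the variance, exchangeability reduces everything to a single pairwise covariance $c_K:=\mathrm{Cov}(\mathds{1}_{k\in B(J)},\mathds{1}_{k'\in B(J)})$, identical for all $k\neq k'$, through
\[ \frac{\mathrm{Var}(|B(J)|)}{K^2} = \frac{\mathrm{Var}(\mathds{1}_{1\in B(J)})}{K} + \frac{K-1}{K}\,c_K \leq \frac{1}{4K} + |c_K|. \]
To see $c_K\to 0$, fix two distinct coordinates $k,k'$ and one block: a multinomial computation gives $\EE[(-1)^{N_k^{(j)}+N_{k'}^{(j)}}]=(1-4/K)^{m_j}$, and inclusion–exclusion from this together with $(1-2/K)^{m_j}$ recovers the joint parity probabilities, for instance $\PP(\text{both odd})=\tfrac14\big(1-2(1-2/K)^{m_j}+(1-4/K)^{m_j}\big)$. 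Using independence across blocks as before, both $\PP(k,k'\in B(J))$ and $\PP(k\in B(J))\PP(k'\in B(J))$ converge to $4^{-s}$ in the fast regime, whence $c_K\to 0$.

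The only delicate point is the variance bound: naively the off-diagonal sum carries of order $K^2$ covariance terms, so bounding each separately would be useless after division by $K^2$. The crucial simplification is that exchangeability collapses this sum to the single quantity $c_K$, whose vanishing reflects the strong decorrelation of distinct coordinates' parities when the block lengths dwarf $K$. Combining the two moment estimates gives the announced $\LL^2$ convergence.
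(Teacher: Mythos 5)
Your proof is correct, and its skeleton is the same as the paper's: prove $\EE[|B(J)|]/K \to 2^{-s}$ and $\VV\bigl(|B(J)|\bigr)/K^2 \to 0$, using exchangeability of the coordinates and independence of the disjoint time blocks. The genuine difference is the auxiliary machinery. The paper factorizes the moments of $|B(J)|$ through the block sets $O_k(J)$ --- namely $\EE[|B(J)|/K]=\prod_{k}\EE[|O_k(J)|/K]$ and, for the off-diagonal part of the second moment, $\prod_k\EE\bigl[|O_k(J)|(|O_k(J)|-1)\bigr]/(K(K-1))$ --- and then delegates the asymptotics of these quantities to its Lemma~\ref{LemmeConveEhren}, which identifies $|O_i^j|$ with an Ehrenfest urn and quotes the first two moments from \cite{Bar82}. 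You bypass the sets $O_k(J)$ and their moments altogether: you get the one-coordinate parity probability from the binomial identity $\EE[(-1)^{N}]=(1-2/K)^{m_j}$, the two-coordinate joint parities from the multinomial identity $\EE[(-1)^{N_k+N_{k'}}]=(1-4/K)^{m_j}$ combined with inclusion--exclusion, and you organize the variance around the single pairwise covariance $c_K$. These are the same quantities in different clothes --- by exchangeability, $\EE[|O_k(J)|]/K$ is exactly your one-coordinate parity probability and $\EE\bigl[|O_k(J)|(|O_k(J)|-1)\bigr]/(K(K-1))$ your two-coordinate one --- and both arguments ultimately hinge on $(1-2/K)^{m_j}$ and $(1-4/K)^{m_j}$ vanishing when $m_j/K\to\infty$. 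What your route buys is self-containedness (no appeal to external urn-moment formulas) and slightly leaner variance bookkeeping; what the paper's route buys is a reusable, separately stated urn lemma whose moment formulas serve verbatim in both the expectation and the variance computations.
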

\begin{proof}
We aim to compute the first two moments of $\frac{\vert B(J)\vert}{K}$ and show they converge to $2^{-s}$ and $0$ respectively. First we use the fact that the coordinates are exchangeable, namely :
\begin{equation}\label{EquatInterProba}
\forall \ i\in [K], \ \PP(i\in O_k(J))=\PP(1\in O_k(J))=\frac{1}{K}\sum_{j=1}^K\PP(j\in O_k(J)).
\end{equation}
Then, since the sets $(O_k(J))_{k\in [s]}$ are independent, we get :
\begin{align*}
\EE\left[\vert B(J)\vert\right] &= \EE\left[\sum_{i=1}^K\prod_{k=1}^s \II_{i\in O_k(J)}\right] = \sum_{i=1}^K\prod_{k=1}^s \PP(i\in O_k(J)) \\
&= \sum_{i=1}^K\prod_{k=1}^s \frac{1}{K}\sum_{j=1}^K \PP(j\in O_k(J)) = \frac{1}{K^{s-1}}\prod_{k=1}^s \EE\left[\vert O_k(J)\vert\right]
\end{align*}
In particular :
\begin{equation}\label{EquatEsperBJ}
\EE\left[ \frac{\vert B(J)\vert}{K}\right]=\prod_{k=1}^s \EE\left[\frac{\vert O_k(J)\vert}{K}\right].
\end{equation}
Next we use the Lemma \ref{LemmeConveEhren} (statement and proof some pages ahead) to get :
\[\EE\left[\frac{\vert O_k(J)\vert}{K}\right] \underset{\substack{n,K\to\infty \\ n/K\to\infty}}{\longrightarrow} \frac{1}{2}.\]
Then :
\[\EE\left[\frac{\vert B(J)\vert}{K}\right] \underset{\substack{n,K\to\infty \\ n/K\to\infty}}{\longrightarrow} \left(\frac{1}{2}\right)^s.\]

Now we compute the variance using \eqref{EquatInterProba} and \eqref{EquatEsperBJ} :
\begin{align*}
\VV\left[\frac{\vert B(J)\vert}{K}\right] &=\EE\left[\left(\frac{\vert B(J)\vert}{K}\right)^2\right] - \EE\left[\frac{\vert B(J)\vert}{K}\right]^2 \\
& = \EE\left[\frac{1}{K^2}\sum_{1\leq i,j\leq K}\II_{i\in B(J)}\II_{j\in B(J)}\right] - \left(\frac{1}{2^s}\right)^2 \\
& = \frac{1}{K^2}\sum_{1\leq i,j\leq K}\prod_{k=1}^s\PP(i\in O_k(J),j\in O_k(J)) \\
&= \frac{1}{K^2}\sum_{1\leq i\leq K}\prod_{k=1}^s\PP(1\in O_k(J)) \\
&\hspace{0.3cm} +\ \ \frac{1}{K^2}\sum_{\substack{1\leq i,j\leq K\\i\neq j}}\prod_{k=1}^s\PP(1\in O_k(J),2\in O_k(J)) \ \ - 4^{-s} \\
& = \frac{1}{K}\prod_{k=1}^s \EE\left[\frac{\vert O_k(J)\vert}{K}\right] \\
&\hspace{0.3cm} +\ \ \frac{K-1}{K}\prod_{k=1}^s\frac{1}{K(K-1)}\sum_{\substack{1\leq i,j\leq K\\i\neq j}}\EE\left[\II_{i\in O_k(J)}\II_{j\in O_k(J)}\right] \ \ -4^{-s} \\
& = \frac{1}{K}\EE\left[\frac{\vert B(J)\vert}{K}\right] \ \ +\ \ \frac{K-1}{K}\prod_{k=1}^s\EE\left[\frac{\vert O_k(J)\vert (\vert O_k(J)\vert -1)}{K(K-1)}\right]-\frac{1}{4^s}.
\end{align*}
The first term obviously tends to 0, so we just have to rewrite the second one in a more suitable way :
\begin{align*}
\frac{K-1}{K}\prod_{k=1}^s\EE\left[\frac{\vert O_k(J)\vert (\vert O_k(J)\vert -1)}{K(K-1)}\right]  =& \\
&\hspace{-2.4cm} \frac{K-1}{K}\prod_{k=1}^s \frac{K}{K-1}\left(\EE\left[\left(\frac{\vert O_k(J)\vert}{K}\right)^2\right]-\EE\left[\frac{\vert O_k(J)\vert}{K^2}\right]\right).
\end{align*}
Using the Lemma \ref{LemmeConveEhren}, we get for every $k\in [s]$ :
\begin{align*}
& \EE\left[\frac{\vert O_k(J)\vert}{K^2}\right] \underset{\substack{n,K\to\infty \\ n/K\to\infty}}{\longrightarrow} 0, \hspace{2cm} \ \EE\left[\left(\frac{\vert O_k(J)\vert}{K}\right)^2\right] \underset{\substack{n,K\to\infty \\ n/K\to\infty}}{\longrightarrow} \frac{1}{4}, \\
& \ \ \ \ \text{and finally }\ \VV\left[\frac{\vert B(J)\vert}{K}\right] \underset{\substack{n,K\to\infty \\ n/K\to\infty}}{\longrightarrow} 0+\frac{1}{4^s}-\frac{1}{4^s}=0.
\end{align*}
\end{proof}

\begin{lemma}\label{LemmeConveEhren}
For every $J\subset [s]$ and $k\in [s]$ we have the following convergence :
\[ \frac{\vert O_k(J)\vert}{K}\overset{\LL^2}{\underset{\substack{n,K\to\infty \\ n/K\to\infty}}{\longrightarrow}}\frac{1}{2}.\]
\end{lemma}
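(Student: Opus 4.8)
The plan is to establish the $\mathcal{L}^2$ convergence by controlling the first two moments of $\frac{|O_k(J)|}{K}$ separately, showing that its expectation tends to $\frac12$ and its variance to $0$. Fix $k\in[s]$ and write $m_k=\lfloor nt_k(n)\rfloor-\lfloor nt_{k-1}(n)\rfloor$ for the number of draws performed during the $k$-th time interval. The standing regime $n/K\to\infty$ together with \eqref{HypotTempsAleat} (and the convention $t_0(n)=0$) ensures that $\frac{m_k}{K}\to\infty$: indeed $m_k\geq n(t_k(n)-t_{k-1}(n))-1$, and dividing by $K$ gives a diverging quantity. During this interval the vector $(N_1,\dots,N_K)$ counting how many times each coordinate is drawn is multinomial with parameters $m_k$ and uniform weights $1/K$, and a coordinate $i$ lies in $O_{\lfloor nt_{k-1}(n)\rfloor}^{\lfloor nt_k(n)\rfloor}$ exactly when $N_i$ is odd.

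For the expectation I would invoke the exchangeability already recorded in \eqref{EquatInterProba}, so that $\EE\!\left[\frac{|O_k(J)|}{K}\right]=\PP(1\in O_k(J))$, and then compute the parity probability for a single coordinate through the elementary identity $\II_{N_1\text{ odd}}=\frac{1-(-1)^{N_1}}{2}$. Since $(-1)^{N_1}=\prod_{l}(-1)^{\II_{U_l^{(K)}=1}}$ factorises over the independent draws $l$ of the interval, one gets $\EE[(-1)^{N_1}]=(1-\tfrac{2}{K})^{m_k}$ — this is exactly the relaxation of the single-site Ehrenfest chain, whose second eigenvalue is $1-\tfrac{2}{K}$. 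Hence $\PP(N_1\text{ odd})=\frac{1-(1-2/K)^{m_k}}{2}$ and $\PP(N_1\text{ even})=\frac{1+(1-2/K)^{m_k}}{2}$; because $(1-2/K)^{m_k}=\exp(m_k\log(1-2/K))\to 0$ under $m_k/K\to\infty$, both parities have probability tending to $\frac12$, so $\PP(1\in O_k(J))\to\frac12$ regardless of whether $k\in J$.

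For the variance I would again use exchangeability to write the second moment as $\EE\!\left[(\tfrac{|O_k(J)|}{K})^2\right]=\frac{1}{K}\PP(1\in O_k(J))+\frac{K-1}{K}\PP(1\in O_k(J),\,2\in O_k(J))$, so it suffices to show the joint probability converges to $\frac14$. Expanding $\II_{N_1\text{ odd}}\II_{N_2\text{ odd}}=\frac14\big(1-(-1)^{N_1}-(-1)^{N_2}+(-1)^{N_1+N_2}\big)$ and using that $(-1)^{N_1+N_2}=\prod_l(-1)^{\II_{U_l^{(K)}\in\{1,2\}}}$ has expectation $(1-\tfrac{4}{K})^{m_k}$, one finds $\PP(1,2\in O_k(J))=\frac14\big(1\mp 2(1-2/K)^{m_k}+(1-4/K)^{m_k}\big)$ (with the analogous sign pattern for the two-even case). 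Since both $(1-2/K)^{m_k}$ and $(1-4/K)^{m_k}$ vanish, this tends to $\frac14$; combined with $\frac{1}{K}\PP(1\in O_k(J))\to 0$, the second moment tends to $\frac14$, which matches the square of the limiting mean and yields $\VV[\frac{|O_k(J)|}{K}]\to 0$.

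The computations are essentially routine once the parity-generating-function mechanism is in place; the only genuinely delicate point I anticipate is verifying that every interval length $m_k$ — including the first one, $k=1$, which is governed by $t_1(n)$ rather than by a consecutive difference appearing in \eqref{HypotTempsAleat} — truly satisfies $m_k/K\to\infty$, since the whole argument collapses if some interval fails to be long enough to wash out the Ehrenfest relaxation factor $(1-2/K)^{m_k}$.
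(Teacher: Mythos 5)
Your proof is correct and follows the same overall strategy as the paper's: establish the $\LL^2$ convergence by showing $\EE\left[\frac{\vert O_k(J)\vert}{K}\right]\to\frac12$ and $\VV\left[\frac{\vert O_k(J)\vert}{K}\right]\to 0$, both driven by the vanishing of $(1-\frac2K)^{\Delta_k(n)}$ and $(1-\frac4K)^{\Delta_k(n)}$ once $\Delta_k(n)/K\to\infty$. The only genuine difference is how the moment formulas are obtained. The paper identifies $\vert O_i^j\vert$ in distribution with a $K$-Ehrenfest urn started at $0$ and quotes its first two moments from the literature (Bar\'oti), whereas you derive them from scratch via the parity identity $\II_{N_i\text{ odd}}=\frac12\left(1-(-1)^{N_i}\right)$ and the factorization of $\EE\left[(-1)^{N_1}\right]$ and $\EE\left[(-1)^{N_1+N_2}\right]$ over the i.i.d. uniform draws, giving $(1-\frac2K)^{m_k}$ and $(1-\frac4K)^{m_k}$; your exchangeability decomposition of the second moment, $\frac1K\PP(1\in O_k(J))+\frac{K-1}{K}\PP(1\in O_k(J),2\in O_k(J))$, reproduces the paper's variance formula $\frac14\left(\frac1K+\frac{K-1}{K}(1-\frac4K)^{\Delta_k(n)}-(1-\frac2K)^{2\Delta_k(n)}\right)$ exactly. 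Your route buys self-containedness at the cost of a little algebra; the paper's buys brevity by citation. Finally, the delicate point you flag at the end is real: as written, \eqref{HypotTempsAleat} only constrains the differences $t_{j+1}(n)-t_j(n)$ for $1\leq j\leq s-1$, so the first interval length $\Delta_1(n)=\lfloor nt_1(n)\rfloor$ is not literally covered. The paper's own proof glosses over this, asserting that $\Delta_k(n)/K\to\infty$ "was assumed" for every $k\in[s]$, i.e.\ it tacitly reads the hypothesis as including the index $j=0$ with the convention $t_0(n)=0$. Both proofs need that reading (or an extra assumption on $t_1(n)$) to handle $k=1$, so this is an imprecision in the hypothesis shared with the paper rather than a gap specific to your argument.
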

\begin{proof}
It's easy to check that $\left\vert O_i^j\right\vert\overset{\mathcal{D}}{=}E_K(j-i)$ where $\left(E_K(n)\right)_{n\in\NN}$ is a $K$-Ehrenfest's urn starting from 0.

Then we can use the moments of Ehrenfest's urn to get the convergence (see for instance \cite{Bar82} for computations of the first two moments of Ehrenfest's urn). Setting $\Delta_k(n) \overset{def}{=} \lfloor nt_k(n)\rfloor - \lfloor nt_{k-1}(n)\rfloor$ we have :
\begin{equation}\label{EquatEsperEhren}
\EE\left[O_{\lfloor nt_{k-1}(n)\rfloor}^{\lfloor nt_k(n)\rfloor}\right] = \frac{K}{2}\left(1-(1-\frac{2}{K})^{\Delta_k(n)}\right) = K-\EE\left[\left(O_{\lfloor nt_{k-1}(n)\rfloor}^{\lfloor nt_k(n)\rfloor}\right)^c\right]
\end{equation}
and then in whichever cases ($k\in J$ or $k\notin J$) we get :
\begin{align*}
\EE\left[\frac{\vert O_k(J)\vert}{K}\right] &= \frac{1}{2}\left(1\pm (1-\frac{2}{K})^{\Delta_k(n)}\right)= \frac{1}{2}\left(1\pm\exp\left(\Delta_k(n)\ln(1-\frac{2}{K})\right)\right) \\
&= \frac{1}{2}\left(1\pm\exp\left(\Delta_k(n)(-\frac{2}{K}+O(K^{-2}))\right)\right) \underset{\substack{n,K\to\infty \\ n/K\to\infty}}{\longrightarrow} \frac{1}{2}
\end{align*}
since we assumed $\frac{\Delta_k(n)}{K}\underset{n,K\to\infty }{\longrightarrow}+\infty$ in \eqref{HypotTempsAleat}.
And for the variance (which is the same in both cases) :
\begin{align*}
&\VV\left[\frac{\vert O_k(J)\vert}{K}\right] = \frac{1}{4}\left(\frac{1}{K}+\frac{K-1}{K}(1-\frac{4}{K})^{\Delta_k(n)} -(1-\frac{2}{K})^{2\Delta_k(n)}\right) \\
&= \frac{1}{4}\left(\frac{1}{K}+\frac{K-1}{K}\exp\left(-4\frac{\Delta_k(n)}{K}+O\left(\frac{\Delta_k(n)}{K^2}\right)\right)\right. \\
&\hspace{5cm}\left.-\exp\left(-4\frac{\Delta_k(n)}{K}+O\left(\frac{\Delta_k(n)}{K^2}\right)\right)\right) \\
&= \frac{1}{4K}\left(1-\exp\left(-4\frac{\Delta_k(n)}{K}+O\left(\frac{\Delta_k(n)}{K^2}\right)\right)\right) \underset{\substack{n,K\to\infty \\ n/K\to\infty}}{\longrightarrow} 0.
\end{align*}
\end{proof}

With the Lemmas \ref{TheorDonskAleat} and \ref{LemmeConveCardi}  we get the following convergence :

\begin{proposition}\label{PropoConveLFD}
\[\left( \frac{1}{\sqrt{K}}\sum_{i\in B(J)}\xi_i\right)_{J\subset [s]}\overset{\mathcal{D}}{\underset{\substack{n,K\to\infty\\ n/K\to\infty}}{\longrightarrow}}\left( G^{(s)}_J\right)_{J\subset [s]} \]
where $\left( G^{(s)}_J\right)_{J\subset [s]}$ denotes an i.i.d. collection of Gaussian random variables $\mathcal{N}(0,2^{-s})$.
\end{proposition}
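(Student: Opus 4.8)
The plan is to reduce the family of block sums indexed by the subsets $J\subset[s]$ to the increments of a single random-time-changed random walk, and then invoke Lemma \ref{TheorDonskAleat}. First I would fix an arbitrary enumeration $J_1,\dots,J_{2^s}$ of the subsets of $[s]$ and introduce the cumulative cardinalities $N_m=\sum_{l=1}^m \vert B(J_l)\vert$ (with $N_0=0$, so that $N_{2^s}=K$ since the $B(J)$ form a partition). Because the partition $\left(B(J)\right)_{J\subset[s]}$ is a function of the uniform variables $\left(U_k^{(K)}\right)_k$ alone, it is independent of the Rademacher sequence $\left(\xi_i\right)_i$. Conditioning on the partition and using that the $\xi_i$ are i.i.d.\ (hence exchangeable), the conditional law of the family of block sums depends only on the cardinalities $\left(\vert B(J_m)\vert\right)_m$; relabeling the indices so that each block becomes a consecutive integer interval therefore yields the distributional identity
\[\left(\frac{1}{\sqrt K}\sum_{i\in B(J_m)}\xi_i\right)_{m}\overset{\mathcal D}{=}\left(\frac{1}{\sqrt K}\sum_{i=N_{m-1}+1}^{N_m}\xi_i\right)_{m},\]
where on the right the $N_m$ are random but, being functions of the $U_k^{(K)}$, are independent of the summed $\xi_i$.

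Next I would observe that each partial sum on the right is exactly $T_K$ evaluated at a random time: in the notation of Lemma \ref{TheorDonskAleat}, $\frac{1}{\sqrt K}\sum_{i=1}^{N_m}\xi_i$ is the $m$-th coordinate of $T_K(\mathbf t_K)$ with $\mathbf t_K=(N_1/K,\dots,N_{2^s}/K)$. By Lemma \ref{LemmeConveCardi}, each $\vert B(J_l)\vert/K\to 2^{-s}$ in $\LL^2$, hence in probability, so $N_m/K\to m\,2^{-s}$ in probability and
\[\mathbf t_K\overset{\PP}{\underset{\substack{n,K\to\infty\\ n/K\to\infty}}{\longrightarrow}}\mathbf t:=(2^{-s},2\cdot 2^{-s},\dots,1).\]
The hypotheses of Lemma \ref{TheorDonskAleat} are then met, which gives $T_K(\mathbf t_K)\overset{\mathcal D}{\longrightarrow}\left(W_{m 2^{-s}}\right)_{m}$ for a standard Brownian motion $W$ starting from $0$.

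Finally I would pass from the cumulative sums back to the block sums. These are the successive increments $\frac{1}{\sqrt K}\sum_{i=N_{m-1}+1}^{N_m}\xi_i=\left[T_K(\mathbf t_K)\right]_m-\left[T_K(\mathbf t_K)\right]_{m-1}$, so applying the continuous mapping theorem to the linear difference map $(w_1,\dots,w_{2^s})\mapsto(w_1,w_2-w_1,\dots,w_{2^s}-w_{2^s-1})$ yields convergence in distribution to $\left(W_{m2^{-s}}-W_{(m-1)2^{-s}}\right)_{m}$. These increments are independent by the independent-increments property of Brownian motion, and each is $\mathcal N(0,2^{-s})$ since the corresponding time interval has length $2^{-s}$; this is precisely the claimed i.i.d.\ family $\left(G^{(s)}_J\right)_{J\subset[s]}$. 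The only delicate point is the first step, namely justifying that the block sums over a \emph{random} partition share their joint law with consecutive-block partial sums; this rests on the independence of the partition from $\left(\xi_i\right)_i$ together with the exchangeability of the Rademacher variables, after which everything is a direct application of the two lemmas and the continuous mapping theorem.
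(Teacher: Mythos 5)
Your proposal is correct and follows essentially the same route as the paper's own proof: enumerate the subsets, reorder the $\xi_i$ so the blocks become consecutive intervals (justified by the independence of the partition from the $\xi_i$), use Lemma \ref{LemmeConveCardi} to get convergence in probability of the normalized cumulative cardinalities to $k/2^s$, and then apply Lemma \ref{TheorDonskAleat} to identify the limit as i.i.d.\ $\mathcal{N}(0,2^{-s})$ Brownian increments. Your explicit appeal to the continuous mapping theorem to pass from the cumulative sums to their increments is a small point the paper glosses over, but it is the same argument.
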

\begin{proof}
Let's write $\lbrace J\subset [s]\rbrace = \lbrace J_1,J_2,\dots ,J_{2^s}\rbrace$ and set $S_k\overset{def}{=}\sum_{i=1}^k \vert B(J_i)\vert$. By reordering the $\xi_i$, we get :
\[\left(\frac{1}{\sqrt{K}}\sum_{i\in B(J)}\xi_i\right)_{J\subset [s]} \overset{\mathcal{D}}{=} \left(\frac{1}{\sqrt{K}}\sum_{i=S_{k-1}}^{S_k-1}\xi_i\right)_{k\in [2^s]}.\]

We can use the Lemma \ref{LemmeConveCardi} to show the following convergence :
\[\left(\frac{S_k}{K}\right)_{k\in [2^s]}\overset{\PP}{\underset{\substack{n,K\to\infty\\ n/K\to\infty}}{\longrightarrow}} \left(\frac{k}{2^s}\right)_{k\in [2^s]}.\]

Since $\left( S_k\right)_{k\in [2^s]}$ and $\left(\xi_i\right)_{i\in\NN}$ are independent, the Lemma \ref{TheorDonskAleat} states :
\[\left(\frac{1}{\sqrt{K}}\sum_{i=S_k}^{S_{k+1}}\xi_i\right)_{k\in [2^s]}\overset{\mathcal{D}}{\underset{\substack{n,K\to\infty\\ n/K\to\infty}}{\longrightarrow}} \left(W_{\frac{k+1}{2^s}}-W_{\frac{k}{2^s}}\right)_{k\in [2^s]}\]
where $\left( W_t\right)_{t\geq 0}$ is a standard Brownian motion starting from $0$. We can conclude thanks to the following :
\[\left(W_{\frac{k+1}{2^s}}-W_{\frac{k}{2^s}}\right)_{k\in [2^s]} \overset{\mathcal{D}}{=} \left( G^{(s)}_J\right)_{J\subset [s]}.\]
\end{proof}

We finally get the convergence of the rescaled finite dimensions laws of the processes $X_{n,K}$ under the fast regime :

\begin{theorem}\label{PropoConveInfin}Under the assumption \eqref{HypotTempsAleat} we have :
\[\frac{X_{n,K}(\mathbf{t}_n)}{\sqrt{K}} \overset{\mathcal{D}}{\underset{\substack{n,K\to\infty\\ n/K\to\infty}}{\longrightarrow}}\left( G_j\right)_{1\leq j\leq s}\]
where $\left( G_j\right)_{1\leq j\leq s}$ are i.i.d. Gaussian random variables $\mathcal{N}(0,1)$.
\end{theorem}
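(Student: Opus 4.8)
The plan is to read the statement directly off the exact distributional identity \eqref{EquatFinal} combined with the convergence already established in Proposition \ref{PropoConveLFD}, exactly along the lines announced in \eqref{EquatBut}. Dividing \eqref{EquatFinal} by $\sqrt{K}$ exhibits the normalized vector as a \emph{fixed} (i.e. $n,K$-independent) linear image of the vector whose limit is already known:
\[\frac{X_{n,K}(\mathbf{t}_n)}{\sqrt{K}}\overset{\mathcal{D}}{=}\mathbf{F}\left(\left(\frac{1}{\sqrt{K}}\sum_{i\in B(J)}\xi_i\right)_{J\subset[s]}\right),\]
where $\mathbf{F}:\RR^{2^s}\to\RR^s$ is the linear map given by $\mathbf{F}\big((x_J)_{J\subset[s]}\big)_j=\sum_{J\subset[s]}(-1)^{\vert J\cap[j]\vert}x_J$ for $j\in[s]$. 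Since a linear map between finite-dimensional spaces is continuous, Proposition \ref{PropoConveLFD} together with the continuous mapping theorem immediately yields
\[\frac{X_{n,K}(\mathbf{t}_n)}{\sqrt{K}}\overset{\mathcal{D}}{\underset{\substack{n,K\to\infty\\ n/K\to\infty}}{\longrightarrow}}\mathbf{F}\big((G^{(s)}_J)_{J\subset[s]}\big),\]
so everything reduces to identifying this limiting random vector.

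Next I would observe that $\mathbf{F}\big((G^{(s)}_J)_J\big)$ is a centered Gaussian vector, being a linear combination of the independent centered Gaussians $G^{(s)}_J\sim\mathcal{N}(0,2^{-s})$. Hence it suffices to compute its covariance matrix and check that it equals the identity, which forces the $s$ coordinates to be i.i.d. $\mathcal{N}(0,1)$. Writing $F_j=\sum_J(-1)^{\vert J\cap[j]\vert}G^{(s)}_J$ and using independence of the $G^{(s)}_J$ one gets
\[\EE[F_j F_{j'}]=2^{-s}\sum_{J\subset[s]}(-1)^{\vert J\cap[j]\vert+\vert J\cap[j']\vert}.\]

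The main (and essentially only) point is then the combinatorial evaluation of this last sum. Assuming $j\leq j'$ one has $[j]\subset[j']$, so each index of $[j]$ contributes an even amount $2\,\II_{i\in J}$ to the exponent while the indices of $[s]\setminus[j']$ contribute nothing; modulo $2$ the exponent reduces to $\vert J\cap([j']\setminus[j])\vert$. Factoring the sum over the independent intersections $J\cap[j]$, $J\cap([j']\setminus[j])$ and $J\cap([s]\setminus[j'])$ gives a product whose middle factor is $\sum_{B\subset[j']\setminus[j]}(-1)^{\vert B\vert}=(1-1)^{j'-j}$. When $j=j'$ this factor equals $1$ and the two remaining factors multiply to $2^s$, so $\EE[F_j^2]=1$; when $j<j'$ the factor $(1-1)^{j'-j}$ vanishes, so $\EE[F_jF_{j'}]=0$. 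Thus the covariance matrix is the identity and the limit is the claimed i.i.d. standard Gaussian vector. I do not expect any serious difficulty beyond this sign computation; the only delicate bookkeeping is keeping the congruence $\vert J\cap[j]\vert+\vert J\cap[j']\vert\equiv\vert J\cap([j']\setminus[j])\vert\pmod 2$ straight.
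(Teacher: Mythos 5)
Your proof is correct, and its skeleton is the same as the paper's: the distributional identity \eqref{EquatFinal}, Proposition \ref{PropoConveLFD}, the continuous mapping theorem applied to the fixed linear map $\mathbf{F}$, and identification of the centered Gaussian limit through its covariance matrix. The only place where you genuinely diverge is the covariance computation, which is the heart of the argument. The paper sidesteps the general combinatorial sum entirely: for $cov(G_i,G_j)$ with $i<j$ it re-applies \eqref{EquatFinal} and Proposition \ref{PropoConveLFD} to the two-point vector $\mathbf{t}'_n=(t_i(n),t_j(n))$ (which still satisfies \eqref{HypotTempsAleat}), so that only the four variables $G^{(2)}_J$, $J\subset[2]$, appear and the covariance collapses to $\VV[G^{(2)}_\emptyset]+\VV[G^{(2)}_{\lbrace 1\rbrace}]-\VV[G^{(2)}_{\lbrace 2\rbrace}]-\VV[G^{(2)}_{\lbrace 1,2\rbrace}]=0$, the variance being handled likewise with $s=1$. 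This reduction implicitly invokes consistency of weak limits (the pair limit obtained from the coarser $s=2$ partition must agree with the $(i,j)$-marginal of the $s$-dimensional limit), which is legitimate by uniqueness of the limit law. Your computation instead stays inside the fixed $s$-dimensional framework and evaluates
\[
\EE[F_jF_{j'}]=2^{-s}\sum_{J\subset[s]}(-1)^{\vert J\cap[j]\vert+\vert J\cap[j']\vert}
=2^{-s}\cdot 2^{j}\,(1-1)^{j'-j}\,2^{s-j'}\qquad (j\leq j'),
\]
which is a correct and self-contained factorization (your parity reduction $\vert J\cap[j]\vert+\vert J\cap[j']\vert\equiv\vert J\cap([j']\setminus[j])\vert \pmod 2$ is right). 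What your route buys is independence from any marginal-consistency argument and a single uniform computation for all entries; what the paper's route buys is the avoidance of combinatorics altogether, at the cost of re-instantiating the whole construction for $s=2$ and $s=1$. Either argument is sound.
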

\begin{proof}
Using Proposition \ref{PropoConveLFD} we know that the limit is a centered Gaussian vector, so we just need to compute its covariance, namely $cov(G_i,G_j)$ for all $1\leq i,j\leq s$. In the case $i< j$, using \eqref{EquatFinal} with $\mathbf{t}'_n=(t_i(n),t_j(n))$ we get
\begin{align*}
&cov(G_i,G_j) = cov\left(\sum_{J\subset [2]}(-1)^{\vert J\cap [1]\vert}G_J^{(2)},\sum_{J\subset [2]}(-1)^{\vert J\cap [2]\vert}G_J^{(2)}\right) \\
&= cov(G_\emptyset^{(2)}-G_{\lbrace 1\rbrace}^{(2)}+G_{\lbrace 2\rbrace}^{(2)}-G_{\lbrace 1,2\rbrace}^{(2)},G_\emptyset^{(2)}-G_{\lbrace 1\rbrace}^{(2)}-G_{\lbrace 2\rbrace}^{(2)}+G_{\lbrace 1,2\rbrace}^{(2)}) \\
&= \VV[G_\emptyset^{(2)}]+\VV[G_{\lbrace 1\rbrace}^{(2)}]-\VV[G_{\lbrace 2\rbrace}^{(2)}]-\VV[G_{\lbrace 1,2\rbrace}^{(2)}] = 0.
\end{align*}
For the variance, just consider $s=1$ and we have for any $j$ :
\begin{align*}
\VV[G_j] = \VV\left[\sum_{J\subset [1]}(-1)^{\vert J\cap [1]\vert}G_J^{(1)}\right] = \VV[G_\emptyset^{(1)}-G_{\lbrace 1\rbrace}^{(1)}]  = \frac{1}{2}+\frac{1}{2} = 1.
\end{align*}
Then $\left(G_j\right)_{1\leq j \leq s}$ is a centered Gaussian vector whose covariance is given by :
\[cov(G_i,G_j)=\II_{i=j}\] which characterizes the standard normal random vector, i.e. a random vector whose marginals are i.i.d. $\mathcal{N}(0,1)$ random variables.
\end{proof}

\begin{corollary}
\[\left(\frac{X_{n,K}(t)}{\sqrt{K}}\right)_{t\geq 0}\overset{f.d.l.}{\underset{\substack{n,K\to\infty\\ n/K\to\infty}}{\longrightarrow}}\left( G_t\right)_{t\geq 0}\]
where $\left( G_t\right)_{t\geq 0}$ is an i.i.d. collection of Gaussian random variables $\mathcal{N}(0,1)$.
\end{corollary}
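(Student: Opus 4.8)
The plan is to read this corollary as nothing more than Theorem~\ref{PropoConveInfin} specialized to \emph{constant} (hence deterministic) time sequences, using the fact that in the fast regime any fixed positive time gap is automatically magnified by the diverging factor $n/K$. Recall that convergence of finite-dimensional laws means that for every $s\geq 1$ and every finite family of times the corresponding joint law of the rescaled process converges to that of the limit; since the limit is a white noise we may, after reordering, restrict attention to pairwise distinct times and consider $0\leq t_1<t_2<\dots<t_s$.

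First I would fix such an $s$ and such times and set the constant sequences $t_j(n):=t_j$ for every $n$. These form a trivially deterministic vector $\mathbf{t}_n=(t_1,\dots,t_s)$, to which the notation of the previous section applies, and $X_{n,K}(\mathbf{t}_n)=\bigl(X_{n,K}(t_1),\dots,X_{n,K}(t_s)\bigr)$ is exactly the finite-dimensional vector to be controlled. Next I would verify the hypothesis \eqref{HypotTempsAleat}: for each $1\leq j\leq s-1$ the gap $t_{j+1}-t_j$ is a fixed strictly positive constant, so
\[\frac{n}{K}\bigl(t_{j+1}(n)-t_j(n)\bigr)=\frac{n}{K}\,(t_{j+1}-t_j)\underset{\substack{n,K\to\infty\\ n/K\to\infty}}{\longrightarrow}+\infty,\]
which is precisely \eqref{HypotTempsAleat}. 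Theorem~\ref{PropoConveInfin} then yields directly
\[\frac{X_{n,K}(\mathbf{t}_n)}{\sqrt{K}}\overset{\mathcal{D}}{\underset{\substack{n,K\to\infty\\ n/K\to\infty}}{\longrightarrow}}(G_j)_{1\leq j\leq s},\]
with the $G_j$ i.i.d.\ $\mathcal{N}(0,1)$. Identifying $G_j$ with $G_{t_j}$, this is exactly the convergence of the finite-dimensional law at $t_1,\dots,t_s$; since $s$ and the times were arbitrary, this establishes the finite-dimensional convergence of the whole process to the white noise $(G_t)_{t\geq 0}$.

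The only point requiring a little care, and the closest thing to an obstacle, is the boundary time $t_1=0$, which arises because \eqref{HypotTempsAleat} only constrains the consecutive gaps $t_{j+1}-t_j$ and hence does not see the interval between $0$ and $t_1$. For this I would argue directly: when $\mu_K$ is uniform, $X_{n,K}(0)/\sqrt{K}=\tfrac{1}{\sqrt{K}}\sum_{i=1}^{K}\xi_i$ converges to $\mathcal{N}(0,1)$ by the classical central limit theorem, and its covariance with any later coordinate $X_{n,K}(t_\ell)/\sqrt{K}$, $t_\ell>0$, equals $(1-\tfrac{2}{K})^{\lfloor nt_\ell\rfloor}\sim e^{-2nt_\ell/K}\to 0$, so the value at $0$ is asymptotically Gaussian and asymptotically independent of the rest. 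Alternatively one simply restricts to strictly positive times $0<t_1<\dots<t_s$, which already determines the law of the white-noise limit. Everything else is an immediate specialization of the already-proven theorem, so the corollary demands no new computation.
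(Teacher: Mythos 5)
Your proposal is correct and, in its main line, is exactly the paper's proof: the paper's entire argument is that a constant vector $0\leq t_1<t_2<\dots<t_s$ satisfies \eqref{HypotTempsAleat} (each fixed positive gap gets multiplied by $n/K\to\infty$), so Theorem~\ref{PropoConveInfin} applies directly.

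Where you go beyond the paper is the boundary case $t_1=0$, and your caution there is genuinely warranted rather than pedantic. As written, \eqref{HypotTempsAleat} constrains only the gaps $t_{j+1}-t_j$ for $1\leq j\leq s-1$, so the paper's one-line proof formally covers $t_1=0$; but the proof of Theorem~\ref{PropoConveInfin} rests on Lemma~\ref{LemmeConveEhren}, which (with the convention $t_0(n)=0$) uses $\Delta_1(n)/K=\lfloor nt_1(n)\rfloor/K\to\infty$, a condition that fails when $t_1=0$. So either the hypothesis is meant to include the first gap, in which case the corollary's appeal to the theorem is illegitimate at $t_1=0$, or it is read literally, in which case the theorem's own proof has a hole there; the paper silently ignores this, and your separate treatment of $t=0$ fills it. Two caveats on your patch, though. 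First, marginal Gaussianity of $X_{n,K}(0)/\sqrt{K}$ together with a vanishing covariance does not by itself yield asymptotic independence; you need joint asymptotic Gaussianity, which you can get from \eqref{EquatFinal}: when $t_1=0$ the cells $B(J)$ with $1\in J$ are empty, the remaining cells satisfy Lemma~\ref{LemmeConveCardi} with $s-1$ in place of $s$, so Proposition~\ref{PropoConveLFD} applies to that sub-partition and the covariance computation of Theorem~\ref{PropoConveInfin} then finishes the argument. Second, your fallback of simply restricting to strictly positive times does not prove the corollary as stated: finite-dimensional convergence of $\left(X_{n,K}(t)/\sqrt{K}\right)_{t\geq 0}$ must hold for every finite collection of times in $[0,\infty)$, including those containing $0$, and the laws at positive times do not by themselves settle those containing the origin.
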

\begin{proof}
Any constant vector $\mathbf{t}=(t_1,\dots, t_s)$ such that $0\leq t_1<t_2<\dots < t_s$ fulfills the assumption \eqref{HypotTempsAleat}, and thus we can apply the previous Theorem.
\end{proof}

We could try to prove the results in the fast regime by dilating time in the intermediate one, but the fact that the dilation goes to infinity rises many technical issues. That is why we use another way to reach the asymptotic behavior in the fast regime.

\section{Slow regime}

Now we consider the slow regime, i.e. the case where $\frac{n}{K}\to 0$. This condition on the asymptotic of $\frac{n}{K}$ raises the following issue : if $Y_K(0)$ is distributed under the invariant law, $\VV[X_{n,K}(0)]=K$ while $\VV[X_{n,K}(t)-X_{n,K}(0)]$ is roughly $nt$. Then either we set $c_{n,K}=\sqrt{K}$ and the limit process is constant, or we consider $c_{n,K}=\sqrt{n}$ and the law of $\frac{X_{n,K}(0)}{c_{n,K}}$ diverges as $n$ and $K$ tend to infinity.

We will consider the second option, and then set $Z_{n,K}(t)=n^{-\frac{1}{2}}X_{n,K}(t)$. If $n^{-\frac{1}{2}}X_{n,K}(0)$ converges in law we can apply the Theorem 1, but it doesn't include the case when $Y_K(0)$ is distributed under the invariant law. 

We will then consider a wider class of initial laws, but in order to get rid of the "diverging initial value" problem we will focus on the increments of the processes, namely we set :
\[\Delta Z_{n,K}(t)\overset{def}{=}Z_{n,K}(t)-Z_{n,K}(0)=\frac{1}{\sqrt{n}}\left(X_{n,K}(t)-X_{n,K}(0)\right).\]

\begin{theorem}\label{PropoConveZero}
If the sequence $\mu_K$ is such that $\frac{\sqrt{n}}{K}X_{n,K}(0)\overset{\PP}{\underset{\substack{n,K\to\infty \\ n/K\to 0}}{\longrightarrow}}0$, then :
\[\left(\Delta Z_{n,K}(t)\right)_{t\geq 0}\overset{\mathcal{D}}{\underset{\substack{n,K\to\infty \\ n/K\to 0}}\longrightarrow}\left(2W_t\right)_{t\geq 0}\]
where $\left(W_t\right)_{t\geq 0}$ denotes a standard Brownian motion starting from $0$.
\end{theorem}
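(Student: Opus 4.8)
The plan is to put the increment process into Doob (martingale--plus--compensator) form, show that the hypothesis forces the compensator to vanish, and recover the limit $2W$ from the surviving martingale via a functional martingale central limit theorem. Morally this is the $\lambda=0$, $\sigma=1$ incarnation of Theorem \ref{RegimInter}, but reached without assuming that $Z_{n,K}(0)$ converges in law. Concretely, set $S_l=f_K(Y_K(l))$ and $\mathcal F_l=\sigma(Y_K(0),\dots,Y_K(l))$. The identity $f_K(Y_K(l+1))=f_K(Y_K(l))-2Y_K^{(U_l^{(K)})}(l)$ from the proof of Proposition \ref{PropoChangNotat}, together with the fact that a uniformly chosen coordinate equals $+1$ with probability $(K+S_l)/(2K)$, gives
\begin{align*}
\EE\!\left[S_{l+1}-S_l\mid\mathcal F_l\right]=-\frac{2}{K}S_l,\qquad
\VV\!\left[S_{l+1}-S_l\mid\mathcal F_l\right]=4\Big(1-\frac{S_l^2}{K^2}\Big).
\end{align*}
Writing $m=\lfloor nt\rfloor$, this yields $\sqrt n\,\Delta Z_{n,K}(t)=S_m-S_0=M_m+A_m$, where $M_m=\sum_{l=0}^{m-1}D_{l+1}$ is a martingale with increments $D_{l+1}=(S_{l+1}-S_l)+\frac{2}{K}S_l$, and $A_m=-\frac{2}{K}\sum_{l=0}^{m-1}S_l$ is the predictable drift.

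Next I would kill the drift. The recursion above also gives $\EE[S_l^2\mid S_0]=K+(1-4/K)^l(S_0^2-K)\le S_0^2+K$, hence $\EE[\,|S_l|\mid S_0]\le |S_0|+\sqrt K$. For a fixed horizon $T$ and any $m\le\lfloor nT\rfloor$ one has $|A_m|\le\frac{2}{K}\sum_{l<\lfloor nT\rfloor}|S_l|$, so
\begin{align*}
\EE\!\left[\sup_{t\le T}\frac{|A_{\lfloor nt\rfloor}|}{\sqrt n}\ \Big|\ S_0\right]
\le \frac{2}{K\sqrt n}\sum_{l=0}^{\lfloor nT\rfloor-1}\EE[\,|S_l|\mid S_0]
\le 2T\,\frac{\sqrt n}{K}\,|S_0|+2T\sqrt{\frac nK}.
\end{align*}
The last term vanishes because $n/K\to0$, and the first is $2T$ times the quantity $\frac{\sqrt n}{K}X_{n,K}(0)$ that the hypothesis sends to $0$ in probability; a conditional Markov inequality and bounded convergence (to unwind the conditioning on the random $S_0$) then give $\sup_{t\le T}\frac{1}{\sqrt n}|A_{\lfloor nt\rfloor}|\to0$ in probability.

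For the martingale term I would apply the functional martingale central limit theorem (Chapter 7 of \cite{EK91}). The jumps of $\frac{1}{\sqrt n}M$ are bounded by $4/\sqrt n\to0$, so the Lindeberg-type condition holds, while the predictable quadratic variation is $\frac{1}{n}\sum_{l=0}^{\lfloor nt\rfloor-1}4(1-S_l^2/K^2)$. Its leading part is $\frac{4\lfloor nt\rfloor}{n}\to4t$, and the correction satisfies $\frac{4}{nK^2}\sum_{l}\EE[S_l^2\mid S_0]\le 4t\big(\tfrac1K+(S_0/K)^2\big)\to0$ in probability, since the hypothesis also forces $X_{n,K}(0)/K\to0$. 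Thus the quadratic variation tends to the deterministic $4t$, and $\frac{1}{\sqrt n}M_{\lfloor n\cdot\rfloor}$ converges in $D(\RR_+,\RR)$ to the continuous centered Gaussian martingale of variance $4t$, that is, to $2W$. Adding back the uniformly negligible $\frac{1}{\sqrt n}A_{\lfloor n\cdot\rfloor}$ gives $\Delta Z_{n,K}\overset{\mathcal D}{\longrightarrow}2W$ in the Skorokhod space.

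I expect the drift control to be the crux: one must show not merely that the \emph{mean} drift vanishes but that its supremum over $[0,T]$ does, and the bound on $\EE[\,|S_l|\mid S_0]$ is precisely what converts the single scalar hypothesis $\frac{\sqrt n}{K}X_{n,K}(0)\to0$ into the required uniform statement. Everything else is standard machinery; the only other delicate point is that all convergences hold in probability rather than almost surely, so the conditioning on $S_0$ has to be removed through bounded convergence instead of a pathwise estimate.
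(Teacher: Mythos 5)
Your proof is correct, and it shares the paper's skeleton — the same Doob decomposition $S_m-S_0=M_m+A_m$ with drift $-\frac{2}{K}\sum_{l<m}S_l$ and conditional variance $4\bigl(1-S_l^2/K^2\bigr)$, fed into Chapter 7 of \cite{EK91} — but it handles the crucial error control by a genuinely different device. The paper invokes Theorem 4.1 of \cite{EK91}, whose hypotheses are \emph{localized} by the stopping times $\tau_n^r=\inf\lbrace t:\vert\Delta Z_{n,K}(t)\vert\geq r\rbrace$: up to $\tau_n^r$ the path is bounded by $r$ pathwise, so the drift supremum is dominated by $2T\frac{\sqrt{n}}{K}\vert X_{n,K}(0)\vert+\frac{2\lfloor nT\rfloor r}{K}$ with no moment computation at all, and the hypothesis on $\frac{\sqrt{n}}{K}X_{n,K}(0)$ is used exactly once. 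You instead avoid stopping times entirely by exploiting the explicit Ehrenfest structure: the closed-form second moment $\EE[S_l^2\mid S_0]=K+(1-4/K)^l(S_0^2-K)\leq S_0^2+K$ gives conditional-expectation bounds on both the drift supremum and the quadratic-variation correction, which you then unwind through conditional Markov plus bounded convergence, concluding with the functional martingale CLT (Theorem 1.4 of Chapter 7) rather than the diffusion-approximation theorem. Each route has its merits: the paper's localization is more robust (it would survive a model where moments are not computable) and keeps the verification list short, while your argument is more self-contained and hands-on, quantifies how the single scalar hypothesis propagates to a uniform bound on $[0,T]$, and makes visible the role of the elementary implication $\frac{\sqrt{n}}{K}\vert X_{n,K}(0)\vert\to 0\Rightarrow\frac{\vert X_{n,K}(0)\vert}{K}\to 0$ (since $\sqrt{n}\geq 1$), which the paper handles implicitly via the crude bound $\vert X_{n,K}(0)\vert\leq K$. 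One small point to make explicit if you write this up: the final step needs the convergence-together lemma in $D(\RR_+,\RR)$, i.e.\ that adding a perturbation whose supremum on compacts vanishes in probability preserves weak convergence to the continuous limit $2W$; this is standard but should be cited.
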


\begin{proof}
The Proposition \ref{EK} can no longer be used since the processes $\Delta Z_{n,K}$ are not Markovian. We need a more general result, namely the Theorem 4.1 from \cite{EK91}.

To apply this theorem, we need to find two sequences of random processes $A_n$ and $B_n$ such that $M_n(t)\overset{def}{=}\Delta Z_{n,K}(t)-B_n(t)$ and $M_n^2(t)-A_n(t)$ are $(\mathcal{F}_t^n)$-local martingales, with $\mathcal{F}_t^n=\sigma\left(\Delta Z_{n,K}(s),A_n(s),B_n(s):s\leq t\right)$.

\medskip

Let $\tilde{\mathcal{F}}_t^n=\sigma\left(X_{n,K}(s):s\leq t\right)$. If we note $\nabla X_{n,K}(i) \overset{def}{=}X_{n,K}\left(\frac{i+1}{n}\right)-X_{n,K}\left(\frac{i}{n}\right)$ we get for any integer $i\geq 0$ :
\begin{align*}
\EE\left[\left. \nabla X_{n,K}(i)\right\vert \tilde{\mathcal{F}}_{i/n}^n\right] &= \EE\left[ f_{K}(Y_{K}(i+1))\left.-f_K(Y_{K}(i)) \right\vert \tilde{\mathcal{F}}_{i/n}^n\right] \\
&=\EE\left[\left. \sum_{k=1}^{K} Y_{K}^{(k)}(i+1)-\sum_{k=1}^{K} Y_{K}^{(k)}(i)\right\vert Y_{K}(i)\right] \\
&=\EE\left[\left. -2Y_{K}^{(j)}(i)\right\vert Y_{K}(i)\right] = -2\sum_{k=1}^{K} \frac{Y_{K}^{(k)}(i)}{K} \\
&= -\frac{2}{K}X_{n,K}\left(\frac{i}{n}\right)
\end{align*}
where $j$ denotes the changing coordinate between $Y_{K}(i)$ and $Y_{K}(i+1)$. Similarly we set $\nabla Z_{n,K}(i) \overset{def}{=}\Delta Z_{n,K}\left(\frac{i+1}{n}\right)-\Delta Z_{n,K}\left(\frac{i}{n}\right)$ and then :
\begin{align*}
\EE\left[\left. \nabla  Z_{n,K}(i)\right\vert \tilde{\mathcal{F}}_{i/n}^n\right] &= \frac{1}{\sqrt{n}}\EE\left[\nabla  X_{n,K}(i)\left. \right\vert \tilde{\mathcal{F}}_{i/n}^n\right] = \frac{1}{\sqrt{n}}\times\frac{-2}{K}X_{n,K}\left(\frac{i}{n}\right) \\
&= -\frac{2}{K}\left(\Delta Z_{n,K}\left(\frac{i}{n}\right)+\frac{X_{n,K}(0)}{\sqrt{n}}\right). 
\end{align*}

So if we set :
\begin{align*}
B_n(t) &\overset{def}{=} \sum_{i=0}^{\lfloor nt\rfloor -1}\EE\left[\left. \nabla  Z_{n,K}(i)\right\vert \tilde{\mathcal{F}}_{i/n}^n\right] = -\frac{2}{K}\sum_{i=0}^{\lfloor nt\rfloor -1}\left(\Delta Z_{n,K}\left(\frac{i}{n}\right)+\frac{X_{n,K}(0)}{\sqrt{n}}\right) \\
A_n(t) &\overset{def}{=} \sum_{i=0}^{\lfloor nt\rfloor -1}\left(\EE\left[\left. \left(\nabla  Z_{n,K}(i)\right)^2\right\vert \tilde{\mathcal{F}}_{i/n}^n\right] -\EE\left[\left. \nabla Z_{n,K}(i)\right\vert \tilde{\mathcal{F}}_{i/n}^n\right]^2\right) \\
& \ = 4\frac{\lfloor nt\rfloor}{n}-\frac{4}{K^2}\sum_{i=0}^{\lfloor nt\rfloor -1}\left(\Delta Z_{n,K}\left(\frac{i}{n}\right)+\frac{X_{n,K}(0)}{\sqrt{n}}\right)^2
\end{align*}

then the processes $M_n(t)$ and $M_n^2(t)-A_n(t)$ will be $(\tilde{\mathcal{F}}_t^n)$-martingales, and then $(\mathcal{F}_t^N)$-martingales (since they are $(\mathcal{F}_t^N)$-adapted).

We now just have to check the technical requirements of the Theorem from \cite{EK91} :
\begin{proposition}
For all $T,r>0$, if $\tau_n^r=\inf\left\lbrace t:\vert \Delta Z_{n,K}(t)\vert\geq r\right\rbrace$ we have :
\begin{align*}
\EE\left[\sup_{0<t\leq T\wedge\tau_n^r}\left\vert\Delta Z_{n,K}(t)- \lim_{\varepsilon\to 0}\Delta Z_{n,K}(t-\varepsilon)\right\vert ^2\right]\underset{n\to\infty}{\longrightarrow}0, \\
\EE\left[\sup_{0<t\leq T\wedge\tau_n^r}\left\vert B_n(t)- \lim_{\varepsilon\to 0}B_n(t-\varepsilon)\right\vert ^2\right]\underset{n\to\infty}{\longrightarrow}0, \\
\EE\left[\sup_{0<t\leq T\wedge\tau_n^r}\left\vert A_n(t)- \lim_{\varepsilon\to 0}A_n(t-\varepsilon)\right\vert \right]\underset{n\to\infty}{\longrightarrow}0, \\
\sup_{t\leq T\wedge\tau_n^r}\vert B_n(t)\vert \overset{\PP}{\underset{n\to\infty}{\longrightarrow}}0 \ \text{and }\sup_{t\leq T\wedge\tau_n^r}\vert A_n(t)-4t\vert \overset{\PP}{\underset{n\to\infty}{\longrightarrow}}0.
\end{align*}
\end{proposition}
\begin{proof}
The first convergence is obvious since the jumps of $\Delta Z_{n,K}$ are bounded by $\frac{1}{\sqrt{n}}$.
Then we deal with the jumps of the compensator $B_n$ :
\begin{align*}
&\EE\left[\sup_{0<t\leq T\wedge\tau_n^r}\left\vert B_n(t)- \lim_{\varepsilon\to 0}B_n(t-\varepsilon)\right\vert ^2\right] \\
&\hspace{2cm} = \EE\left[\sup_{\frac{i}{n}\leq T\wedge\tau_n^r}\left\vert -\frac{2}{K}\left(\Delta Z_{n,K}\left(\frac{i}{n}\right)+\frac{X_{n,K}(0)}{\sqrt{n}}\right) \right\vert ^2\right] \\
&\hspace{2cm} \leq \EE\left[\sup_{\frac{i}{n}\leq T\wedge\tau_n^r}4\left(\left(\frac{\Delta Z_{n,K}(i/n)}{K}\right)^2 + \left( \frac{X_{n,K}(0)}{K\sqrt{n}} \right)^2\right)\right] \\
&\hspace{2cm} \leq 4\left(\left(\frac{r}{K}\right)^2 + \left( \frac{1}{\sqrt{n}} \right)^2\right)
\end{align*}
which obviously tends to 0. The same kind of computations applied to $A_n$ ensure the third convergence.

The trickiest assumptions in this proposition are the two last ones, namely~:
\[\sup_{t\leq T\wedge\tau_n^r}\vert B_n(t)\vert \overset{\PP}{\underset{n\to\infty}{\longrightarrow}}0 \text{ and }\sup_{t\leq T\wedge\tau_n^r}\vert A_n(t)-4t\vert \overset{\PP}{\underset{n\to\infty}{\longrightarrow}}0.\]
With a good use of the stopping times $T$ and $\tau_n^r$ we get the following bound :
\begin{align*}
\sup_{t\leq T\wedge\tau_n^r}\vert B_n(t)\vert &= \sup_{t\leq T\wedge\tau_n^r}\left\vert -\frac{2}{K}\left(\frac{\lfloor nt\rfloor X_{n,K}(0)}{\sqrt{n}}+\sum_{i=0}^{\lfloor nt\rfloor -1}\Delta Z_{n,K}\left(\frac{i}{n}\right)\right)\right\vert \\
&\leq \frac{2\lfloor nT\rfloor}{K\sqrt{n}}\vert X_{n,K}(0)\vert + \frac{2\lfloor nT\rfloor r}{K}
\end{align*}
where the second term tends to $0$ in this regime. Since we assumed :
\[\frac{\sqrt{n}}{K}X_{n,K}(0)\overset{\PP}{\underset{\substack{n,K\to\infty \\ n/K\to 0}}{\longrightarrow}}0\]
then the supremum of $\vert B_n\vert$ converges to $0$ in probability.

The last convergence may be proven exactly the same way.
\end{proof}

Then we fulfilled the assumptions of the Theorem 4.1 from \cite{EK91}, which prove that the processes $\Delta Z_{n,K}$ converge in distribution to a diffusion $(X_t)_{t\geq 0}$ such that $X_0=0$ a.s. and $dX_t=0dt + 2dW_t$. We easily conclude that :
\[\left(\Delta Z_{n,K}(t)\right)_{t\geq 0}\overset{\mathcal{D}}{\underset{n,K\to\infty}\longrightarrow}\left(2W_t\right)_{t\geq 0}.\]
\end{proof}

The Theorem \ref{PropoConveZero} describes correctly the increments of the processes $Z_{n,K}$ (especially in the stationary case), but does not help to approximate the processes themselves, since we lose the information of the initial value. In fact, if $Y_K(0)$ is uniform on $V_K$ the processes $Z_{n,K}$ tend to behave like a "stationary Brownian motion", which starts from its invariant measure aka the Lebesgue measure on $\RR$. Obviously this is no longer a stochastic process, this is why we cannot use a convergence in distribution, but instead we will prove a vague convergence of the finite-dimensional laws of our processes :

\begin{theorem}\label{TheorRegimLent}
If $\mu_K$ is the uniform law on $V_K$, then for any $0\leq t_1<\dots <t_s$ and for every smooth compactly supported function $\varphi\in\mathcal{C}_c^\infty (\RR^s,\RR)$ we have the following convergence :
\begin{align*}
&\sqrt{2\pi\frac{K}{n}}\times \EE\left[\varphi \left(Z_{n,K}(t_1),\Delta Z_{n,K}(t_2),\dots,\Delta Z_{n,K}(t_s) \right)\right] \\
&\hspace{5.5cm} \underset{n,K\to\infty}{\longrightarrow}\int_{\RR^s}\EE\left[\varphi\left(x,2W_{t_2},\dots,2W_{t_s}\right)\right]dx
\end{align*}
where $(W_t)_{t\geq 0}$ is a standard Brownian motion starting from $0$.
\end{theorem}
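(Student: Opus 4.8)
The plan is to condition on the initial value $Z_{n,K}(0)$, which is the sole source of divergence, and to decouple it from the increments. Writing $\nu_{n,K}$ for the law of $Z_{n,K}(0)=n^{-1/2}f_K(Y_K(0))$ and
\[g_{n,K}(z)\overset{def}{=}\EE\left[\varphi\left(z+\Delta Z_{n,K}(t_1),\Delta Z_{n,K}(t_2),\dots,\Delta Z_{n,K}(t_s)\right)\,\middle|\,Z_{n,K}(0)=z\right],\]
the quantity to study is exactly $\sqrt{2\pi K/n}\int_\RR g_{n,K}\,d\nu_{n,K}$. A preliminary remark, resting on the same exchangeability of the coordinates used in Lemma \ref{LemmeConveCardi}, is that $g_{n,K}(z)$ is well defined: by Proposition \ref{PropoChangNotat} the increments are functions of $Y_K(0)$ and of the exchangeable family $(\II_{\epsilon_i\text{ odd}})_i$, so conditionally on $Y_K(0)=v$ their joint law depends on $v$ only through $f_K(v)$. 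The proof then rests on two ingredients: a local central limit theorem turning $\sqrt{2\pi K/n}\,\nu_{n,K}$ into the Lebesgue measure, and the pointwise convergence $g_{n,K}\to g$ where $g(z)=\EE[\varphi(z+2W_{t_1},2W_{t_2},\dots,2W_{t_s})]$.

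For the first ingredient, $f_K(Y_K(0))$ is a sum of $K$ i.i.d. Rademacher variables, hence has mean $0$, variance $K$ and lattice span $2$; the local limit theorem gives $\PP(f_K(Y_K(0))=m)\sim\frac{2}{\sqrt{2\pi K}}e^{-m^2/(2K)}$ for $m=O(\sqrt K)$. Thus $Z_{n,K}(0)$ carries mass $\approx\frac{2}{\sqrt{2\pi K}}e^{-nz^2/(2K)}$ on the points $z=m/\sqrt n$, spaced $2/\sqrt n$ apart, so for any continuous compactly supported $\psi$ a Riemann-sum computation yields $\sqrt{2\pi K/n}\,\EE[\psi(Z_{n,K}(0))]\to\int_\RR\psi(z)\,dz$ (using $e^{-nz^2/(2K)}\to 1$ uniformly on the support of $\psi$ since $n/K\to 0$). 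Equivalently $\sqrt{2\pi K/n}\,\nu_{n,K}$ converges vaguely to the Lebesgue measure, and the renormalized lattice density is bounded by $e^{-nz^2/(2K)}(1+o(1))$, a Gaussian tail I will use below.

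For the second ingredient I invoke the intermediate regime in its degenerate case $\lambda=0$, exactly as announced in the Remark following Theorem \ref{RegimInter}. Fixing $z\in\RR$ and starting the walk from a fixed configuration whose coordinate-sum is the integer of correct parity nearest to $z\sqrt n$ (which exists once $K\ge|z|\sqrt n$), the deterministic initial value converges to $z$, so Theorem \ref{RegimInter} with $\lambda=0$, $\sigma=1$, $c_{n,K}=\sqrt n$ gives $\left(Z_{n,K}(\lfloor nt\rfloor)\right)_{t\ge 0}\Rightarrow\left(z+2W_t\right)_{t\ge 0}$, whence jointly $(Z_{n,K}(t_1),\Delta Z_{n,K}(t_2),\dots)\Rightarrow(z+2W_{t_1},2W_{t_2},\dots)$ and $g_{n,K}(z)\to g(z)$. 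The Lebesgue target of the statement is then recovered by translation invariance: by Fubini and the substitution $y=x+2W_{t_1}$ inside the expectation, $\int_\RR g(z)\,dz=\int_\RR\EE[\varphi(z+2W_{t_1},2W_{t_2},\dots)]\,dz=\int_\RR\EE[\varphi(x,2W_{t_2},\dots)]\,dx$.

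It remains to combine the two ingredients, and this is the main obstacle: the renormalized measure converges to one of infinite mass, so I must show that $g_{n,K}$ is essentially compactly supported uniformly in $n,K$, not merely bounded. Splitting at $|z|\le R$ and $|z|>R$, the bulk part tends to $\int_{-R}^{R}g$ by writing $g_{n,K}=g+(g_{n,K}-g)$: the first piece converges since $g$ is continuous and bounded and the measures converge vaguely, the second vanishes by the locally uniform convergence $g_{n,K}\to g$ (the only $z$-dependence enters through a drift of order $\frac{n}{K}z\to 0$, which is uniform on $[-R,R]$). For the tail, $\varphi$ is supported in some $[-R_0,R_0]^s$, so $|g_{n,K}(z)|\le\|\varphi\|_\infty\PP(|z+\Delta Z_{n,K}(t_1)|\le R_0\mid z)\le 4\|\varphi\|_\infty z^{-2}\,\EE[\Delta Z_{n,K}(t_1)^2\mid z]$ for $|z|\ge 2R_0$, and an explicit computation of the conditional second moment through the Ehrenfest moments of Lemma \ref{LemmeConveEhren} bounds it by $C(1+(n/K)^2z^2)$. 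Weighting by the Gaussian density then gives
\[\sqrt{\tfrac{2\pi K}{n}}\int_{|z|>R}|g_{n,K}|\,d\nu_{n,K}\le C'\int_{|z|>R}\frac{1+(n/K)^2z^2}{z^2}\,e^{-nz^2/(2K)}\,dz\le \frac{2C'}{R}+C'\sqrt{2\pi}\,(n/K)^{3/2},\]
which vanishes as first $n,K\to\infty$ and then $R\to\infty$. Since $g$ is itself integrable, letting $R\to\infty$ in the bulk term concludes.
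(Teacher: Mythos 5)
Your proof is correct in substance, and it shares the paper's two key ingredients — the local limit theorem (the paper's Theorem~\ref{DLLT}) to turn $\sqrt{2\pi K/n}$ times the law of $Z_{n,K}(0)$ into Lebesgue measure, and Theorem~\ref{RegimInter} with $\lambda=0$, $\sigma=1$ to identify the conditional limit law — but it is structurally different in one genuine respect. The paper first invokes the temporal stationarity of $Z_{n,K}$ (available because $\mu_K$ is the invariant law) to reduce to $t_1=0$; after that reduction the first argument of $\varphi$ \emph{is} the conditioning value, so the conditional expectation vanishes identically outside the compact support of $\varphi$, the whole analysis lives on a fixed compact set, and dominated convergence with dominating function $\Vert\varphi\Vert_\infty\II_{x\in C_0}$ finishes the proof with no tail estimate at all. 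You skip that reduction, keep $t_1$ general, and therefore must control the tail $|z|>R$ of a measure whose renormalization has infinite total mass; you do this with Chebyshev plus an explicit conditional second-moment bound $\EE[\Delta Z_{n,K}(t_1)^2\mid z]\le C(1+(n/K)^2z^2)$ (which is indeed what the Ehrenfest computations of Lemma~\ref{LemmeConveEhren} give, including the cross terms $\PP(\epsilon_i,\epsilon_j\text{ both odd})\le\PP(\epsilon_i\text{ odd})^2$), and you then recover the stated limit via translation invariance of Lebesgue measure. The trade-off is clear: the paper's route is shorter and essentially tail-free, while yours is more robust — it uses stationarity of $\mu_K$ only through the local CLT for $f_K(Y_K(0))$, so it would extend to non-invariant initial laws satisfying an LLT and the moment bound, and it treats the first coordinate honestly rather than by a shift of the time origin.

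One technical point deserves a fix. Your pointwise density bound, ``the renormalized lattice density is bounded by $e^{-nz^2/(2K)}(1+o(1))$,'' does not follow from Theorem~\ref{DLLT}: its error is an \emph{additive} $o(K^{-1/2})$, uniform in $m$, which is useless in the far tail where the Gaussian term is exponentially smaller than the error. The bound itself is true (by Stirling, or the entropy inequality $\binom{K}{(K+m)/2}2^{-K}\le e^{-m^2/(2K)}$ combined with the uniform $O(K^{-1/2})$ bound), but you can avoid it entirely: in your Chebyshev estimate, integrate the constant part $(n/K)^2$ against $\nu_{n,K}$ using only that $\nu_{n,K}$ is a probability measure, which gives $\sqrt{2\pi K/n}\,(n/K)^2=\sqrt{2\pi}\,(n/K)^{3/2}\to 0$, and integrate the $z^{-2}$ part using only the uniform bound $\PP(f_K(Y_K(0))=m)\le C K^{-1/2}$, which gives
\begin{equation*}
\sqrt{2\pi K/n}\;n\sum_{|m|>R\sqrt{n}}m^{-2}\,\PP\bigl(f_K(Y_K(0))=m\bigr)\;\le\;\sqrt{2\pi K/n}\;n\cdot\frac{C}{\sqrt{K}}\cdot\frac{2}{R\sqrt{n}}\;=\;\frac{2C\sqrt{2\pi}}{R}.
\end{equation*}
With that substitution your tail bound, and hence the whole argument, is complete.
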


\begin{proof}
Since the processes $Z_{n,K}$ are temporally stationnary we can assume that $t_1=0$. We will write $\varphi(Z_{n,K})\overset{def}{=}\varphi \left(Z_{n,K}(0),\Delta Z_{n,K}(t_2),\dots,\Delta Z_{n,K}(t_s) \right)$ to lighten the notations, and consider the following consequence of Theorem \ref{RegimInter} from the intermediate regime :
\begin{corollary}
Let $(x_n)_{n\in\NN}$ be a deterministic sequence such that $x_n\underset{n\to\infty}{\longrightarrow}x$ for some $x\in\RR$. Then under the slow regime we get :
\begin{align*}
\EE\left[\left. \varphi(Z_{n,K})\right\vert Z_{n,K}(0) = x_n\right]\underset{n,K\to\infty}{\longrightarrow}\EE\left[\varphi\left(x, 2W_{t_2},\dots, 2W_{t_s}\right)\right].
\end{align*}
\end{corollary}
\begin{proof}
We apply the Theorem \ref{RegimInter} with $c_{n,K}=n^{-\frac{1}{2}}$, $\lambda = 0$ and $Z_{n,K}(0)\overset{a.s.}{=}x_n$. Then the limit process is just $(2W_t + x)_{t\geq 0}$ where $W$ is a standard Brownian motion starting from $0$.
\end{proof}
Then we can write the following :
\begin{align*}
&\sqrt{2\pi\frac{K}{n}}\times \EE\left[\varphi (Z_{n,K})\right]
= \EE\left[\sqrt{2\pi\frac{K}{n}}\times \EE\left[\left. \varphi (Z_{n,K})\right\vert Z_{n,K}(0) \right]\right] \\
&= \sum_{k\in\ZZ} \sqrt{2\pi\frac{K}{n}}\times \EE\left[\left. \varphi (Z_{n,K})\right\vert Z_{n,K}(0)=\frac{k}{\sqrt{n}} \right]\times\PP\left(Z_{n,K}(0)=\frac{k}{\sqrt{n}}\right)\\
&= \int_\RR \sqrt{2\pi K}\times \EE\left[\left. \varphi (Z_{n,K})\right\vert Z_{n,K}(0)=\frac{\lfloor\sqrt{n}x\rfloor}{\sqrt{n}} \right]\times\PP\left(Z_{n,K}(0)=\frac{\lfloor\sqrt{n}x\rfloor}{\sqrt{n}}\right)dx.
\end{align*}
We will need the following result (which is a consequence of De Moivre-Laplace Theorem, see for instance \cite{GK68}) to replace the probability in the previous line by a "Gaussian equivalent" :
\begin{theorem}\label{DLLT}
If $(\xi_i)_{i\in\NN}$ is an i.i.d. sequence of Rademacher random variables, then for every $N\geq 1$ :
\begin{equation*}
\sup_{m\in\ZZ}\sqrt{N}\times\left\vert\PP\left(\sum_{i=1}^N \xi_i = m\right) - \frac{2}{\sqrt{2\pi N}}e^{-\frac{m^2}{2N}}\II_{m\equiv N[2]}\right\vert\underset{N\to\infty}{\longrightarrow} 0
\end{equation*}
where $x\equiv y[2]$ means that the integers $x$ and $y$ have same parity.
\end{theorem}
In our case we would get :
\begin{equation*}
\sup_{x\in\RR}\sqrt{K}\times\left\vert\PP\left(Z_{n,K}(0) = \frac{\lfloor \sqrt{n}x\rfloor}{\sqrt{n}}\right) - \frac{2}{\sqrt{2\pi K}}e^{-\frac{\lfloor \sqrt{n}x\rfloor^2}{2K}}\II_{\lfloor \sqrt{n}x\rfloor\equiv K[2]}\right\vert\underset{n,K\to\infty}{\longrightarrow} 0.
\end{equation*}
Using the fact that $\varphi$ is compactly supported, there exists some compact set $C_0$ such that $x\notin C_0 \Longrightarrow  \EE\left[\left. \varphi (Z_{n,K})\right\vert Z_{n,K}(0)=\frac{\lfloor\sqrt{n}x\rfloor}{\sqrt{n}} \right]=0$. Then we have :
\begin{align*}
&\left\vert \int_\RR \sqrt{2\pi K}\times \EE\left[\left. \varphi (Z_{n,K})\right\vert Z_{n,K}(0)=\frac{\lfloor\sqrt{n}x\rfloor}{\sqrt{n}} \right]\times\PP\left(Z_{n,K}(0)=\frac{\lfloor\sqrt{n}x\rfloor}{\sqrt{n}}\right)dx \right. \\
&\hspace{2cm}\left. - \int_\RR \EE\left[\left. \varphi (Z_{n,K})\right\vert Z_{n,K}(0)=\frac{\lfloor\sqrt{n}x\rfloor}{\sqrt{n}} \right]2e^{-\frac{\lfloor \sqrt{n}x\rfloor^2}{2K}}\II_{\lfloor \sqrt{n}x\rfloor\equiv K[2]}dx\right\vert \\
&\hspace{0.5cm} \leq \vert\vert\varphi\vert\vert_\infty \int_\RR\sqrt{2\pi K}\left\vert\PP\left(Z_{n,K}(0) = \frac{\lfloor \sqrt{n}x\rfloor}{\sqrt{n}}\right) \right. \\
&\hspace{5cm} \left. - \frac{2}{\sqrt{2\pi K}}e^{-\frac{\lfloor \sqrt{n}x\rfloor^2}{2K}}\II_{\lfloor \sqrt{n}x\rfloor\equiv K[2]}\right\vert\II_{x\in C_0}dx
\end{align*}
which tends to $0$ as $n$ and $K$ tend to infinity.

The next step is to get rid of the "same parity indicator", but the reader can easily check that the error term we get by turning $\II_{\lfloor nx\rfloor\equiv K[2]}$ into $\frac{1}{2}$ will also vanish as $n$ and $K$ go to infinity (thanks to the uniform continuity of $\varphi$ and the compactness of $C_0$).

Then :
\begin{align*}
&\left\vert \sqrt{2\pi\frac{K}{n}}\times \EE\left[\varphi (Z_{n,K})\right] - \int_\RR \EE\left[\left. \varphi (Z_{n,K})\right\vert Z_{n,K}(0)=\frac{\lfloor\sqrt{n}x\rfloor}{\sqrt{n}} \right]e^{-\frac{\lfloor \sqrt{n}x\rfloor^2}{2K}}dx\right\vert \\
&\hspace{1cm}\underset{n,K\to\infty}{\longrightarrow} 0. 
\end{align*}

The result of Theorem \ref{TheorRegimLent} follows from dominated convergence since in the slow regime :
\begin{align*}
&\EE\left[\left. \varphi (Z_{n,K})\right\vert Z_{n,K}(0)=\frac{\lfloor\sqrt{n}x\rfloor}{\sqrt{n}} \right] \overset{\forall  x\in\RR}{\underset{n,K\to\infty}{\longrightarrow}} \EE\left[\varphi\left(x, 2W_{t_2},\dots, 2W_{t_s}\right)\right], \\
&\hspace{1cm}e^{-\frac{\lfloor \sqrt{n}x\rfloor^2}{2K}}\overset{\forall  x\in\RR}{\underset{n,K\to\infty}{\longrightarrow}} 1 \hspace{0.5cm} \text{and}\hspace{0.5cm} \int_\RR \vert\vert\varphi\vert\vert_\infty\II_{x\in C_0}dx <+\infty.
\end{align*}
\end{proof}

\appendix

\section*{Appendix 1 : Proof of Lemma \ref{TheorDonskAleat}}

Recall the Lemma \ref{TheorDonskAleat} which comes from the "Fast regime" section :

\begin{Lemma2}
Let $\left(\xi_i\right)_{i\in\NN}$ be i.i.d. Rademacher random variables and $\left(\mathbf{t}_K\right)_{K\geq 1}$ a sequence of random vectors in $\left(\RR_+\right)^k$ independent from $\left(\xi_i\right)_{i\in\NN}$. Define for all $K\geq 1$ and $\mathbf{s}=(s_1,\dots,s_k)\in\left(\RR_+\right)^k$ :
\[T_K(\mathbf{s})=\left(\frac{1}{\sqrt{K}}\sum_{i=1}^{\lfloor Ks_j\rfloor}\xi_i\right)_{j\in [k]}.\]

If there exists a deterministic $\mathbf{t}=(t_1,\dots,t_k)\in\left(\RR_+\right)^k$ such that $\mathbf{t}_K\overset{\PP}{\underset{K\to\infty}{\longrightarrow}}\mathbf{t}$, then :
\[T_K(\mathbf{t}_K)\overset{\mathcal{D}}{\underset{K\to\infty}{\longrightarrow}}\left(W_{t_j}\right)_{j\in [k]}\]
where $\left(W_s\right)_{s\in\RR_+}$ denotes a standard Brownian motion starting from $0$.
\end{Lemma2}

For all $\mathbf{x}=(x_1,\dots,x_k)\in\RR^k$ we will consider the following norm :
\[\vert\vert\mathbf{x}\vert\vert = \sup_{i\in [k]} \vert x_i\vert.\]
For the sake of simplicity we will set $\RR_+^k\overset{def}{=}\left(\RR_+\right)^k$, and for every $\RR^k$-valued process $\left(X_t\right)_{t\geq 0}$ and $\mathbf{t}=(t_1,\dots,t_k)\in\RR_+^k$ we will note $X_\mathbf{t}\overset{def}{=}\left(X_{t_1},\dots,X_{t_k}\right)$.

In order to prove the Lemma \ref{TheorDonskAleat}, we will use several times the following result :
\begin{lemma}
Let $\left(W_t\right)_{t\in\RR_+}$ be a standard Brownian motion starting from $0$. Then for all $\varepsilon>0$ and $d>0$ there exists $\delta>0$ such that :
\[\vert\vert \mathbf{x}-\mathbf{y}\vert\vert \leq \delta \ \Rightarrow \ \PP(\vert\vert W_\mathbf{x}-W_\mathbf{y}\vert\vert > d)< \varepsilon. \] \label{LemmeBrown}
\end{lemma}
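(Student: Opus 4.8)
The plan is to reduce the multivariate statement to a one-dimensional Gaussian tail estimate, exploiting the fact that each increment $W_{x_i}-W_{y_i}$ of the Brownian motion depends on the times only through their difference. Concretely, for each coordinate $i\in[k]$ the random variable $W_{x_i}-W_{y_i}$ is centered Gaussian with variance $\vert x_i-y_i\vert$, so its law is entirely controlled by $\vert x_i-y_i\vert$ and not by the absolute positions $x_i,y_i$. This is exactly what makes the estimate \emph{uniform} over all pairs $\mathbf{x},\mathbf{y}\in\RR_+^k$, which is the crucial feature of the statement: Brownian motion is not uniformly continuous on all of $\RR_+$, but its increments over short time intervals are uniformly small in probability.

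First I would fix $\varepsilon,d>0$ and observe that, whenever $\vert x_i-y_i\vert\leq\delta$ for some $\delta>0$, the scaling property of the Gaussian law gives
\[\PP\left(\vert W_{x_i}-W_{y_i}\vert>d\right)=\PP\left(\vert Z\vert>\frac{d}{\sqrt{\vert x_i-y_i\vert}}\right)\leq\PP\left(\vert Z\vert>\frac{d}{\sqrt{\delta}}\right),\]
where $Z$ denotes a standard normal variable (and the inequality holds trivially, with left-hand side equal to $0$, in the degenerate case $x_i=y_i$). The key point is that the right-hand bound no longer depends on $i$ nor on the particular values of $x_i,y_i$, only on $\delta$.

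Next I would pass from the coordinatewise estimate to the sup-norm estimate by a simple union bound. Since $\vert\vert\mathbf{x}-\mathbf{y}\vert\vert\leq\delta$ forces $\vert x_i-y_i\vert\leq\delta$ for every $i\in[k]$, the event $\lbrace\vert\vert W_\mathbf{x}-W_\mathbf{y}\vert\vert>d\rbrace$ is contained in the union of the coordinatewise events, hence
\[\PP\left(\vert\vert W_\mathbf{x}-W_\mathbf{y}\vert\vert>d\right)\leq\sum_{i=1}^k\PP\left(\vert W_{x_i}-W_{y_i}\vert>d\right)\leq k\,\PP\left(\vert Z\vert>\frac{d}{\sqrt{\delta}}\right).\]
Finally, since $d/\sqrt{\delta}\to+\infty$ as $\delta\to 0$, the right-hand side tends to $0$, so choosing $\delta$ small enough yields $k\,\PP(\vert Z\vert>d/\sqrt{\delta})<\varepsilon$, which is the desired conclusion. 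I do not expect any genuine obstacle here: the only subtle point is the uniformity over all $\mathbf{x},\mathbf{y}$, and this is resolved precisely because the law of each increment depends solely on the time difference, turning what could have been a delicate equicontinuity argument into an elementary Gaussian tail bound.
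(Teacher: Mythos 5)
Your proof is correct and follows essentially the same route as the paper: a union bound over the $k$ coordinates combined with the fact that each increment $W_{x_i}-W_{y_i}$ is $\mathcal{N}(0,\vert x_i-y_i\vert)$, hence its tail is controlled by $\vert x_i-y_i\vert\leq\delta$ alone. The only difference is that you make explicit, via the scaling $\PP(\vert W_{x_i}-W_{y_i}\vert>d)\leq\PP(\vert Z\vert>d/\sqrt{\delta})$, the step the paper dismisses with ``one can easily find a suitable $\delta$''.
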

\begin{proof}
\begin{align*}
\PP(\vert\vert W_\mathbf{x}-W_\mathbf{y}\vert \vert > d) &= \PP\left(\sup_{1\leq i\leq k} \vert  W_{x_i}-W_{y_i} \vert > d\right) \\
&\leq \sum_{i=1}^k \PP(\vert  W_{x_i}-W_{y_i} \vert > d) \\
&= \sum_{i=1}^k \PP(\vert  N_i \vert > d)
\end{align*}
where $N_i$ is a $\mathcal{N}(0,\vert x_i-y_i\vert)$ random variable. One can easily find a suitable $\delta>0$ such that~:
\[\vert x_i-y_i\vert \leq \delta \ \Rightarrow \ \PP(\vert N_i\vert > d)< \frac{\varepsilon}{k} \]
and get the result using the fact that $\vert\vert \mathbf{x}-\mathbf{y}\vert\vert\leq \delta \Rightarrow \vert x_i-y_i\vert \leq \delta$ for all $i\in [k]$.
\end{proof}

Let's get back to the main proof. Assuming the premises of Lemma \ref{TheorDonskAleat}, we want to prove that for every function $f:\RR_+^k\rightarrow \RR$ bounded and uniformly continuous we have :
\[\EE[f(T_K(\mathbf{t}_K))]\underset{K\to\infty}{\longrightarrow}\EE[f(W_\mathbf{t})].\]

Let $\varepsilon >0$. For all $\delta >  0$ we have :
\begin{align*}
\vert \EE[f(T_K(\mathbf{t}_K))] - \EE[f(W_\mathbf{t})] \vert &\leq \vert\EE[(f(T_K(\mathbf{t}_K)) - f(W_\mathbf{t}))\II_{\vert\vert \mathbf{t}_K-\mathbf{t}\vert\vert >\delta}]\vert\\
& \ \ + \vert\EE[(f(T_K(\mathbf{t}_K)) - f(W_\mathbf{t}))\II_{\vert\vert \mathbf{t}_K-\mathbf{t}\vert\vert\leq\delta}]\vert \\
&\leq 2\vert \vert f\vert\vert_\infty\PP(\vert\vert \mathbf{t}_K-\mathbf{t}\vert\vert >\delta) \\
& \ \ + \vert\EE[(f(T_K(\mathbf{t}_K)) - f(W_\mathbf{t}))\II_{\vert\vert \mathbf{t}_K-\mathbf{t}\vert\vert\leq\delta}]\vert .
\end{align*}

Since $\mathbf{t}_K\overset{\PP}{\underset{K\to\infty}{\longrightarrow}}\mathbf{t}$ then $\forall \ \delta >0$ there exists $N_1(\varepsilon,\delta)\in\NN$ such that for every $K\geq N_1(\varepsilon,\delta)$ we have :
\begin{equation}\label{DominProba1}
 \PP(\vert\vert \mathbf{t}_K-\mathbf{t}\vert\vert >\delta) \leq \frac{\varepsilon}{8\vert\vert f\vert\vert_\infty}.
 \end{equation}

Now we split the other term in two parts (which will be dominated separately) :
\begin{align*}\vert\EE[(f(T_K(\mathbf{t}_K)) - f(W_\mathbf{t}))\II_{\vert\vert \mathbf{t}_K-\mathbf{t}\vert\vert\leq\delta}]\vert \leq &\vert\EE[(f(T_K(\mathbf{t}_K)) - f(W_{\mathbf{t}_K}))\II_{\vert\vert \mathbf{t}_K-\mathbf{t}\vert\vert\leq\delta}]\vert \\
&+ \vert\EE[(f(W_{\mathbf{t}_K}) - f(W_\mathbf{t}))\II_{\vert\vert \mathbf{t}_K-\mathbf{t}\vert\vert\leq\delta}]\vert.
\end{align*}

Let's define the functions $\varphi_K$ and $\varphi$ by :
\[\varphi_K(\mathbf{s})=\EE[f(T_K(\mathbf{s}))] \text{ and } \varphi(\mathbf{s})=\EE[f(W_\mathbf{s})] \  \text{for } \mathbf{s}\in\RR_+^k .\]

\begin{proposition}
The functions $\left(\varphi_K\right)_{K\geq 1}$ converge uniformly on every compact of $\RR_+^k$ to the function $K$.
\end{proposition}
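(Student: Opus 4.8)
The plan is to establish the two ingredients of an Arzel\`a--Ascoli type argument, namely pointwise convergence of $\varphi_K$ to $\varphi$ together with a modulus of oscillation that is uniform in $K$, and then to conclude on a finite $\delta$-net of the given compact $C\subset\RR_+^k$. Pointwise convergence is immediate: for a fixed deterministic $\mathbf{s}$ the vector $T_K(\mathbf{s})$ is the evaluation at times $s_1,\dots,s_k$ of the rescaled Rademacher walk $t\mapsto K^{-1/2}\sum_{i=1}^{\lfloor Kt\rfloor}\xi_i$, so by Donsker's invariance principle (equivalently, the multivariate central limit theorem) it converges in distribution to $W_\mathbf{s}$; since $f$ is bounded and continuous this gives $\varphi_K(\mathbf{s})\to\varphi(\mathbf{s})$ for every $\mathbf{s}$. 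The continuity of the limit $\varphi$ follows from Lemma \ref{LemmeBrown}: bounding $\vert\varphi(\mathbf{s})-\varphi(\mathbf{s}')\vert$ by splitting on the event $\{\vert\vert W_\mathbf{s}-W_{\mathbf{s}'}\vert\vert>d\}$, the uniform continuity of $f$ handles the part where the two Gaussian vectors are close, while Lemma \ref{LemmeBrown} makes the complementary probability arbitrarily small.

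The core of the proof is a uniform control of the oscillations of the $\varphi_K$. Given $\eta>0$, the uniform continuity of $f$ provides $\rho>0$ with $\vert\vert\mathbf{x}-\mathbf{y}\vert\vert\leq\rho\Rightarrow\vert f(\mathbf{x})-f(\mathbf{y})\vert\leq\eta$, whence for all $\mathbf{s},\mathbf{s}'$
\[\vert\varphi_K(\mathbf{s})-\varphi_K(\mathbf{s}')\vert\leq\eta+2\vert\vert f\vert\vert_\infty\,\PP\left(\vert\vert T_K(\mathbf{s})-T_K(\mathbf{s}')\vert\vert>\rho\right).\]
The $j$-th coordinate of $T_K(\mathbf{s})-T_K(\mathbf{s}')$ is $K^{-1/2}$ times a sum of at most $K\vert\vert\mathbf{s}-\mathbf{s}'\vert\vert+1$ independent Rademacher variables, so a union bound over $j$ followed by Hoeffding's inequality yields
\[\PP\left(\vert\vert T_K(\mathbf{s})-T_K(\mathbf{s}')\vert\vert>\rho\right)\leq 2k\exp\left(-\frac{\rho^2 K}{2\left(K\vert\vert\mathbf{s}-\mathbf{s}'\vert\vert+1\right)}\right).\]
As $K\to\infty$ with $\vert\vert\mathbf{s}-\mathbf{s}'\vert\vert\leq\delta$ the exponent tends to $-\rho^2/(2\delta)$, so for $\delta$ small enough there is a $K_0$ such that this tail stays below $\eta$ for every $K\geq K_0$. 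I thereby obtain $\delta>0$ and $K_0$ with $\vert\vert\mathbf{s}-\mathbf{s}'\vert\vert\leq\delta\Rightarrow\vert\varphi_K(\mathbf{s})-\varphi_K(\mathbf{s}')\vert\leq(1+2\vert\vert f\vert\vert_\infty)\eta$ for all $K\geq K_0$, a bound independent of $K$. Note that each $\varphi_K$ is only piecewise constant, with jumps on the grid $K^{-1}\ZZ$; these jumps are themselves controlled by the same estimate and shrink as $K$ grows, which is exactly why a uniform oscillation bound, rather than genuine equicontinuity, is what is both available and needed.

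Finally I assemble the pieces by a standard $\varepsilon/3$ argument. Fix $\varepsilon>0$ and choose $\eta$, $\delta$ and $K_0$ as above so that the oscillation bound is $<\varepsilon/3$ for $K\geq K_0$, shrinking $\delta$ further if necessary so that the continuous limit $\varphi$ also oscillates by less than $\varepsilon/3$ over distances $\leq\delta$ on $C$. Cover $C$ by finitely many balls of radius $\delta$ with centers $\mathbf{s}^{(1)},\dots,\mathbf{s}^{(m)}$; pointwise convergence gives $N\geq K_0$ such that $\vert\varphi_K(\mathbf{s}^{(l)})-\varphi(\mathbf{s}^{(l)})\vert<\varepsilon/3$ for all $l$ and all $K\geq N$. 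For any $\mathbf{s}\in C$, routing the triangle inequality through the nearest center yields $\vert\varphi_K(\mathbf{s})-\varphi(\mathbf{s})\vert<\varepsilon$, that is $\sup_{\mathbf{s}\in C}\vert\varphi_K(\mathbf{s})-\varphi(\mathbf{s})\vert\to 0$. The genuine obstacle is the uniform-in-$K$ oscillation estimate: one must bound the fluctuation of a Rademacher partial sum over a window of length of order $K\vert\vert\mathbf{s}-\mathbf{s}'\vert\vert$ by a quantity that does not deteriorate as $K\to\infty$, which is precisely what the sub-Gaussian (Hoeffding) tail delivers; the remaining packaging is routine.
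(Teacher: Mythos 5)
Your proof is correct, and its skeleton is the same as the paper's: pointwise convergence of $\varphi_K$ to $\varphi$ (via Donsker/multivariate CLT), continuity of $\varphi$ (via Lemma \ref{LemmeBrown} and the uniform continuity of $f$), a uniform-in-$K$ oscillation bound for $\varphi_K$, and an $\varepsilon/3$ assembly over a finite $\delta$-net of the compact set. The genuine difference is in the key oscillation estimate. The paper obtains it by citing the maximal inequality that underlies the tightness half of Donsker's theorem (from the proof of Theorem 4.20 in Karatzas--Shreve), namely $\lim_{\delta\to 0}\sup_{K\geq 1}\PP\left(\max_{\vert x-y\vert\leq\delta,\ 0\leq x,y\leq D}\left\vert T_K(x)-T_K(y)\right\vert >c\right)=0$, and then bounds the two-point probability $\PP(\vert\vert T_K(\mathbf{t})-T_K(\mathbf{s})\vert\vert >d)$ by $k$ times this path-modulus probability. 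You instead prove exactly what is needed from scratch: a union bound over the $k$ coordinates followed by Hoeffding's inequality on a window of at most $K\vert\vert\mathbf{s}-\mathbf{s}'\vert\vert+1$ Rademacher variables, giving the sub-Gaussian tail $2k\exp\left(-\rho^2K/(2(K\vert\vert\mathbf{s}-\mathbf{s}'\vert\vert+1))\right)$. Your route is more elementary and self-contained -- the net argument only ever compares $\varphi_K$ at two fixed points, so a two-point tail suffices and the full maximal inequality is overkill -- and your observation that the $\varphi_K$, being piecewise constant, are not genuinely equicontinuous (so an asymptotic oscillation bound is the correct substitute) is accurate. The only trade-off is that your bound holds for $K\geq K_0(\delta)$ rather than uniformly in $K\geq 1$ as in the cited estimate, which is harmless since the conclusion is asymptotic, and you correctly thread the choice $N\geq K_0$ through the final step.
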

\begin{proof}
We already know via the Donsker's theorem that the sequence $\left(\varphi_K\right)_{K\geq 1}$ converges pointwise to $\varphi$ (see for instance \cite{KS91}).

Let $S\subset \RR_+^k$ compact, for all $\delta>0$ there exists a finite subset $\mathcal{M}\subset S$ such that :
\[\forall \ \mathbf{x}\in S \ \exists \ \mathbf{y}\in\mathcal{M} : \vert\vert \mathbf{x}-\mathbf{y}\vert\vert\leq \delta.\]

Let $\varepsilon >0$ and $\mathbf{t}\in S$, and choose $\mathbf{s}\in\mathcal{M}$ such that $\vert\vert \mathbf{t}-\mathbf{s}\vert\vert\leq \delta$. We then get :
\begin{equation}\label{EquatDecouTrois}
\vert\varphi_K(\mathbf{t})-\varphi(\mathbf{t})\vert \leq \vert\varphi_K(\mathbf{t})-\varphi_K(\mathbf{s})\vert + \vert\varphi_K(\mathbf{s})-\varphi(\mathbf{s})\vert + \vert\varphi(\mathbf{s})-\varphi(\mathbf{t})\vert.
\end{equation}

Let's dominate the first term. The function $f$ being uniformly continuous, there exists $d >0$ such that $\vert\vert \mathbf{x}-\mathbf{y}\vert\vert \leq d \Rightarrow \vert f(\mathbf{x})-f(\mathbf{y})\vert < \varepsilon/6$. Then we get :
\begin{align*}
\vert\varphi_K(\mathbf{t})-\varphi_K(\mathbf{s})\vert &\leq \EE[\vert f(T_K(\mathbf{t}))-f(T_K(\mathbf{s}))\vert] \phantom{qofihsgofg} \\
&  = \EE[\vert f(T_K(\mathbf{t}))-f(T_K(\mathbf{s}))\vert\II_{\vert\vert T_K(\mathbf{t})-T_K(\mathbf{s})\vert\vert \leq d}] \\
& \hspace{1cm}  + \EE[\vert f(T_K(\mathbf{t}))-f(T_K(\mathbf{s}))\vert\II_{\vert\vert T_K(\mathbf{t})-T_K(\mathbf{s})\vert\vert > d}] \\
& \hspace{-0.5cm} \leq \frac{\epsilon}{6}\PP(\vert\vert T_K(\mathbf{t})-T_K(\mathbf{s})\vert\vert \leq d) +2\vert\vert f \vert\vert_\infty\PP(\vert\vert T_K(\mathbf{t})-T_K(\mathbf{s})\vert\vert > d).
\end{align*}

But we can see in the proof of the Theorem 4.20 (p70) from \cite{KS91} that $\forall \  c >0$ et $\forall \ D>0$~:
\begin{equation*}
\lim_{\delta \to 0} \sup_{n\geq 1} \PP\left(\max_{\substack{\vert x-y\vert \leq \delta \\ 0\leq x,y \leq D}} \left\vert T_K(x)-T_K(y)\right\vert >c\right) =0. \label{KaratShrev}
\end{equation*}

So there exists some $\delta_1 (\varepsilon,d)>0$ such that $\forall\ \delta\leq \delta_1 (\varepsilon,d)$ :
\[ \sup_{n\geq 1} \PP\left(\max_{\substack{\vert x-y\vert \leq \delta \\ 0\leq x,y \leq D}} \left\vert T_K(x)-T_K(y)\right\vert >d\right)\leq \frac{\varepsilon}{12\vert\vert f\vert\vert_\infty k}\]
and then, setting $D(S)\overset{def}{=}\underset{\mathbf{z}\in S}{\sup}\vert\vert \mathbf{z}\vert\vert$, if we choose $\delta$ smaller than $\delta_1 (\varepsilon,d)$ we have $\forall \ K\geq 1$ :
\begin{align*}
\PP(\vert\vert T_K(\mathbf{t})-T_K(\mathbf{s})\vert\vert > d) &\leq k \times \PP\left(\max_{\substack{\vert x-y\vert \leq \delta \\ 0\leq x,y \leq D(S)}} \left\vert T_K(x)-T_K(y)\right\vert >d\right) \\
&\leq \frac{\varepsilon k}{12\vert\vert f\vert\vert_\infty k}=\frac{\varepsilon}{12\vert\vert f\vert\vert_\infty }.
\end{align*}

We finally get :
\begin{align*}
\vert\varphi_K(\mathbf{t})-\varphi_K(\mathbf{s})\vert \leq {} &  \frac{\epsilon}{6}\PP(\vert\vert T_K(\mathbf{t})-T_K(\mathbf{s})\vert\vert \leq d) \\
& +2\vert\vert f \vert\vert_\infty\PP(\vert\vert T_K(\mathbf{t})-T_K(\mathbf{s})\vert\vert > d) \\
\leq {} & \frac{\epsilon}{6} + 2\vert\vert f \vert\vert_\infty \frac{\varepsilon }{12\vert\vert f\vert\vert_\infty } =\frac{\varepsilon}{3}.
\end{align*}

Let's deal with the second term in \eqref{EquatDecouTrois}, namely $\vert\varphi_K(\mathbf{s})-\varphi(\mathbf{s})\vert$. Since $\varphi_K$ converges pointwise to $\varphi$, for every $\mathbf{x}\in\RR_+^k$ there exists $N_\mathbf{x}(\varepsilon)$ such that :
\[K\geq N_\mathbf{x}(\varepsilon) \Longrightarrow \vert\varphi_K(\mathbf{x})-\varphi(\mathbf{x})\vert \leq \frac{\varepsilon}{3}.\]
We just have to take $K$ greater than $N(\varepsilon,\delta)\overset{def}{=}\underset{\mathbf{x}\in \mathcal{M}}{\max}\ N_\mathbf{x}(\varepsilon)$ to get :
\[\vert\varphi_K(\mathbf{s})-\varphi(\mathbf{s})\vert \leq \frac{\varepsilon}{3}.\]

For the third term in \eqref{EquatDecouTrois}, we use again the uniform continuity of $f$ to get the following :
\begin{align*}
\vert\varphi(\mathbf{s})-\varphi(\mathbf{t})\vert &\leq \EE[\vert f(W_\mathbf{s})-f(W_\mathbf{t})\vert\II_{\vert\vert W_\mathbf{s}-W_\mathbf{t}\vert \vert \leq d}] \\
&\hspace{1cm} + \EE[\vert f(W_\mathbf{s})-f(W_\mathbf{t})\vert\II_{\vert\vert W_\mathbf{s}-W_\mathbf{t}\vert \vert > d}]\\
&\leq \frac{\varepsilon}{6}\PP(\vert\vert W_\mathbf{s}-W_\mathbf{t}\vert \vert \leq d) + 2\vert\vert f\vert\vert_\infty \PP(\vert\vert W_\mathbf{s}-W_\mathbf{t}\vert \vert > d).
\end{align*}

Using the Lemma \ref{LemmeBrown}, we find that $\exists \ \delta_2(\varepsilon,d)$ such that :
\[\delta\leq\delta_2(\varepsilon,d) \Longrightarrow \PP(\vert\vert W_\mathbf{s}-W_\mathbf{t}\vert \vert > d) <\frac{\varepsilon}{12\vert\vert f\vert\vert_\infty}.\]

We then have :
\begin{align*}
\vert\varphi(\mathbf{s})-\varphi(\mathbf{t})\vert &\leq  \frac{\varepsilon}{6}\PP(\vert\vert W_\mathbf{s}-W_\mathbf{t}\vert \vert \leq d) + 2\vert\vert f\vert\vert_\infty \PP(\vert\vert W_\mathbf{s}-W_\mathbf{t}\vert \vert > d)\\
&\leq \frac{\varepsilon}{6} + 2\vert\vert f\vert\vert_\infty  \frac{\varepsilon}{12\vert\vert f \vert\vert_\infty } =\frac{\varepsilon}{3}.
\end{align*}

Grouping all these results in \eqref{EquatDecouTrois}, we get that if $\delta\leq \min(\delta_1(\varepsilon,d),\delta(\varepsilon,d))$ then $\forall \ K\geq N(\varepsilon,\delta)$ we have :
\begin{equation*}
\sup_{\mathbf{t}\in S}\vert\varphi_K(\mathbf{t})-\varphi(\mathbf{t})\vert \leq \frac{\varepsilon}{3}+\frac{\varepsilon}{3}+\frac{\varepsilon}{3}=\varepsilon.
\end{equation*}
\end{proof}

Using the previous Proposition we get that for every $S$ compact subset of $\RR_+^k$ there exists $N(\varepsilon,S)$ such that $\forall \ \mathbf{s}\in S$ and $\forall \ K\geq N(\varepsilon,S)$ :
\[\vert \varphi_K(\mathbf{s})-\varphi(\mathbf{s})\vert \leq \frac{\varepsilon}{4}.\]
Using the independence of the $(\mathbf{t}_K)_K$ and the $(\xi_i)_i$ we can write that :
\begin{align*}
\EE[f(T_K(\mathbf{t}_K))] &= \EE[\EE[f(T_K(\mathbf{t}_K))\vert \mathbf{t}_K]] = \EE[\varphi_K(\mathbf{t}_K)].
\end{align*}

Then, since the ball of radius $\delta$ centered on $\mathbf{t}$ is compact, there exists an integer $N_2(\varepsilon,\delta)\overset{def}{=}N(\varepsilon,B(\mathbf{t},\delta))$ such that for all $K\geq N_2(\varepsilon,\delta)$ we have :
\begin{align}\label{DominEspe2}
\vert\EE [(f(T_K(\mathbf{t}_K)) - f(W_{\mathbf{t}_K}))\II_{\vert\vert \mathbf{t}_K-\mathbf{t}\vert\vert\leq\delta}]\vert &= \vert\EE[\EE[(f(T_K(\mathbf{t}_K))\nonumber \\ \nonumber
&\hspace{1cm}- f(W_{\mathbf{t}_K}))\vert \mathbf{t}_K]\II_{\vert\vert \mathbf{t}_K-\mathbf{t}\vert\vert\leq\delta}]\vert \\ \nonumber
&=\vert\EE[(\varphi_K(\mathbf{t}_K)-\varphi(\mathbf{t}_K))\II_{\vert\vert \mathbf{t}_K-\mathbf{t}\vert\vert\leq\delta}]\vert \\ \nonumber
&\leq \EE[\vert \varphi_K(\mathbf{t}_K)-\varphi(\mathbf{t}_K)\vert\II_{\mathbf{t}_K\in B(\mathbf{t},\delta)}] \\ 
&\leq \frac{\varepsilon}{4} \PP(\mathbf{t}_K\in B(\mathbf{t},\delta)) \leq \frac{\varepsilon}{4}.
\end{align}

Now we have to dominate $\vert\EE[(f(W_{\mathbf{t}_K}) - f(W_\mathbf{t}))\II_{\vert\vert \mathbf{t}_K-\mathbf{t}\vert\vert\leq\delta}]\vert $. Using the uniform continuity of $f$ we know there exists some $d>0$ such that :
\[\vert\vert \mathbf{x}-\mathbf{y}\vert\vert \leq d \Rightarrow \vert f(\mathbf{x})-f(\mathbf{y})\vert \leq \frac{\varepsilon}{4}.\]
We then have :
\begin{align}\label{DominEspe3}
\vert\EE[(f(W_{\mathbf{t}_K}) - f(W_\mathbf{t}))\II_{\vert\vert \mathbf{t}_K-\mathbf{t}\vert\vert\leq\delta}]\vert & \nonumber \\ \nonumber
&\hspace{-2cm} \leq  \EE[\vert f(W_{\mathbf{t}_K}) - f(W_\mathbf{t})\vert\II_{\vert\vert \mathbf{t}_K-\mathbf{t}\vert\vert\leq\delta}\II_{\vert\vert W_{\mathbf{t}_K}-W_\mathbf{t}\vert\vert \leq d}] \\ \nonumber
&\hspace{-1cm} + \EE[\vert f(W_{\mathbf{t}_K}) - f(W_\mathbf{t})\vert\II_{\vert\vert \mathbf{t}_K-\mathbf{t}\vert\vert\leq\delta}\II_{\vert\vert W_{\mathbf{t}_K}-W_\mathbf{t}\vert\vert > d}] \\ \nonumber
&\hspace{-2cm}\leq \frac{\varepsilon}{4}\PP(\vert\vert \mathbf{t}_K-\mathbf{t}\vert\vert\leq\delta ,\vert\vert W_{\mathbf{t}_K}-W_\mathbf{t}\vert\vert \leq d) \\ 
&\hspace{-1cm} + 2\vert\vert f \vert\vert_\infty  \PP(\vert\vert \mathbf{t}_K-\mathbf{t}\vert\vert\leq\delta ,\vert\vert W_{\mathbf{t}_K}-W_\mathbf{t}\vert\vert > d).
\end{align}

Using the Lemma \ref{LemmeBrown}, for all $d>0$ there exists $\delta(\varepsilon, d) >0$ small enough such that the following holds :
\begin{equation}\label{DominProba4}
\PP(\vert\vert \mathbf{t}_K-\mathbf{t}\vert\vert\leq\delta ,\vert\vert W_{\mathbf{t}_K}-W_\mathbf{t}\vert\vert > d)\leq \frac{\varepsilon}{8\vert\vert f \vert\vert_\infty}.
\end{equation}

In a nutshell if we sum up all the previous step, we find that for every bounded continuous function $f$ and $\forall \ \varepsilon >0$, we can choose $d\leq d(\varepsilon)$ such that \eqref{DominEspe3} holds, $\delta \leq \delta(\varepsilon,d)$ to have \eqref{DominProba4}, and then $K\geq \max(N_1(\varepsilon,\delta),N_2(\varepsilon,\delta)$ to get \eqref{DominProba1} and \eqref{DominEspe2}, which finally yields :
\begin{align*}
\vert \EE[f(T_K(\mathbf{t}_K))] - &\EE[f(W_\mathbf{t})] \vert \leq  \EE[\vert f(T_K(\mathbf{t}_K)) - f(W_\mathbf{t})\vert\II_{\vert\vert \mathbf{t}_K-\mathbf{t}\vert\vert >\delta}] \\
& + \EE[\vert f(T_K(\mathbf{t}_K)) - f(W_{\mathbf{t}_K})\vert\II_{\vert\vert \mathbf{t}_K-\mathbf{t}\vert\vert\leq\delta}] \\
& + \EE[\vert f(W_{\mathbf{t}_K}) - f(W_\mathbf{t})\vert\II_{\vert\vert \mathbf{t}_K-\mathbf{t}\vert\vert\leq\delta}\II_{\vert\vert W_{\mathbf{t}_K}-W_\mathbf{t}\vert\vert \leq d}] \\
& + \EE[\vert f(W_{\mathbf{t}_K}) - f(W_\mathbf{t})\vert\II_{\vert\vert \mathbf{t}_K-\mathbf{t}\vert\vert\leq\delta}\II_{\vert\vert W_{\mathbf{t}_K}-W_\mathbf{t}\vert\vert > d}] \\
&\leq  2\vert\vert f \vert\vert_\infty \frac{\varepsilon}{8\vert\vert f\vert\vert_\infty} + \frac{\varepsilon}{4} + \frac{\varepsilon}{4} + 2\vert\vert f \vert\vert_\infty \frac{\varepsilon}{8\vert\vert f\vert\vert_\infty} = \varepsilon
\end{align*}
and thus $T_K(\mathbf{t_K})\overset{\mathcal{D}}{\underset{K\to\infty}{\longrightarrow}}W_\mathbf{t}$.

\begin{acknowledgements}

I greatly thank my Ph.D advisor Serge Cohen for his constant help, and also Laurent Miclo, Charles Bordenave and Philippe Berthet for fruitful discussions and advices.

\end{acknowledgements}







\begin{thebibliography}{1}

\bibitem{Bar82}
G.~Bar\'oti.
\newblock Limit distributions for {E}hrenfest's urn model.
\newblock In {\em Statistics and probability ({V}isegr\'ad, 1982)}, pages 1--8.
  Reidel, Dordrecht, 1984.

\bibitem{BCEN15}
Emmanuel Boissard, Serge Cohen, Thibault Espinasse, and James Norris.
\newblock Diffusivity of a random walk on random walks.
\newblock {\em Random Structures \& Algorithms}, 47(2):267--283, 2015.

\bibitem{CM16}
Djalil Chafa{\"i} and Florent Malrieu.
\newblock {\em {Recueil de mod{\`e}les al{\'e}atoires}}, volume~78 of {\em
  Math{\'e}matiques et Applications}.
\newblock {Springer}, June 2016.

\bibitem{EK91}
Stewart~N. Ethier and Thomas~G. Kurtz.
\newblock {\em Markov processes - Characterization and convergence}.
\newblock Wiley Series in Probability and Mathematical Statistics: Probability
  and Mathematical Statistics. John Wiley \& Sons, Inc., New York, 1986.

\bibitem{GK68}
Boris~V. Gnedenko and Andreï~N. Kolmogorov.
\newblock {\em Limit distributions for sums of independent random variables}.
\newblock Addison-Wesley series in statistics, 1968.

\bibitem{KS91}
Ioannis Karatzas and Steven~E. Shreve.
\newblock {\em Brownian motion and stochastic calculus}, volume 113 of {\em
  Graduate Texts in Mathematics}.
\newblock Springer-Verlag, New York, second edition, 1991.

\bibitem{MT09}
Sean~P Meyn and Richard~L Tweedie.
\newblock {\em Markov chains and stochastic stability}, volume~1.
\newblock Cambridge University Press, 2009.

\end{thebibliography}
\end{document}